\numberwithin{equation}{section}
\theoremstyle{plain}
\newtheorem{theorem}{Theorem}[section]
\newtheorem{lemma}{Lemma}[section]
\newtheorem{proposition}{Proposition}[section]
\newtheorem{remark}{Remark}[section]
\def\@journal{Submitted}
\begin{document}

\newcommand\E{\mathbb{E}}
\newcommand\cov{\mathop{\text{Cov}}}
\newcommand\tr{\mathop{\text{tr}}}
\newcommand\topii{{\frac1{2\pi i}}}
\newcommand\mbar{\underline{m}}
\newcommand\reu[1]{{\color{black}#1}}
\begin{frontmatter}
  \title{On the sphericity test with  large-dimensional observations}
  \runtitle{Large-dimensional sphericity test}

 \thankstext{T1}{Research of this author was partly supported by
 the National Natural Science Foundation of China (Grant No. 11071213), the Natural Science
Foundation of Zhejiang Province (No. R6090034), and the Doctoral Program Fund of
Ministry of Education (No. J20110031).}
\thankstext{t2}{Research of this author was partly supported by
 a HKU start-up grant.}

  \begin{aug}
    \author{\fnms{Qinwen} \snm{Wang}\thanksref{T1}\ead[label=e1]{wqw8813@gmail.com}}
    \and
    \author{\fnms{Jianfeng} \snm{Yao}\thanksref{t2}\ead[label=e3]{jeffyao@hku.hk}}

    \runauthor{Q. Wang and J.Yao}

    \affiliation{Zhejiang  University and The University of Hong Kong}

    \address{Qinwen Wang  \\
      Department of Mathematics\\
      Zhejiang University \\
      \printead{e1}
    }

    \address{Jianfeng Yao \\
      Department of Statistics and Actuarial Science\\
      The University of Hong Kong\\
      Pokfulam, \quad
      Hong Kong \\
      \printead{e3}
    }
  \end{aug}

  \begin{abstract}
    In this paper, we propose corrections to the likelihood ratio test and John's test for sphericity in large-dimensions.
    New formulas for the limiting parameters in the CLT for linear spectral statistics of sample covariance matrices with general fourth moments are first established.
    Using these formulas, we derive the asymptotic
    distribution of the two proposed test statistics under the null. These asymptotics are valid for general population, i.e. not necessarily Gaussian, provided a finite fourth-moment. Extensive Monte-Carlo experiments are conducted to assess the quality of these tests with a comparison to several existing methods  from the literature.
    Moreover, we also obtain their asymptotic power functions  under the alternative of a  spiked population model as  a specific alternative.
  \end{abstract}

  \begin{keyword}[class=MSC]
    \kwd[Primary ]{62H15}
    \kwd[; secondary ] {62H10}
  \end{keyword}

  \begin{keyword}
    \kwd{Large-dimensional data} \kwd{Large-dimensional sample covariance matrix} \kwd{Sphericity} \kwd{Likelihood ratio test}
    \kwd{John's test} \kwd{Nagao's test} \kwd{CLT for linear spectral statistics}
    \kwd{Spiked population model}
  \end{keyword}

\tableofcontents
\end{frontmatter}
\section{Introduction}

Consider a sample $Y_1,\ldots,Y_n$
from a $p$-dimensional multivariate distribution
with covariance matrix $\Sigma_p$.
An important problem in multivariate  analysis is to test the
sphericity, namely
the hypothesis  $H_{0}: \Sigma_p = \sigma^{2}I_p$
where $\sigma^{2}$ is {\em unspecified}.
If the observations represent a multivariate error
with $p$ components, the null hypothesis expresses the fact
that the error is cross-sectionally uncorrelated (independent if in
addition  they are  normal)
and  have a same variance
(homoscedasticity).

Much of the  existing
theory about  this
test  has been exposed first in  details in \cite{r17}
 about Gaussian likelihood ratio test and later in \cite{r10}, \cite{r11}, \cite{r27} and also in textbooks like \cite[Chapter 8]{r18} and
\cite[Chapter 10]{r1}.
Assume for a   moment that the
sample has a normal distribution with mean zero and covariance matrix $\Sigma_p$. Let
  $S_n =n^{-1} \sum_i Y_iY_i^*$ be the sample covariance matrix and
denote its eigenvalues by $\{\ell_i\}_{1\le i\le p}$.
Two well established procedures for testing the sphericity  are the
likelihood ratio test ({\em{LRT}}) and a test devised in \cite{r10}.
The likelihood ratio statistic is,
see e.g. \cite[\S 10.7.2]{r1},
\begin{equation*}
  \label{Ln}
  L_n =   \left(  \frac{ (\ell_1\cdots \ell_p)^{1/p}} {\frac1p (\ell_1+\cdots+\ell_p)}
         \right)^{\frac12pn}~,
\end{equation*}
which is a power of the ratio of the geometric mean of the sample
eigenvalues to the arithmetic mean.  It is here noticed that in this formula
it is necessary to assume that  $p\le n$ to avoid null eigenvalues in
(the numerator of) $L_n$.  If we let $n\to\infty$ while keeping $p$ fixed, classical
asymptotic theory indicates that under the
null hypothesis,
$-2\log L_n  \Longrightarrow \chi^{2}_f $, a chi-square distribution
with degree of freedom  $f= \frac{1}{2}p(p+1)-1$.
This asymptotic distribution is further refined by
the following Box-Bartlett correction (referred as {\em{BBLRT}}):
\begin{eqnarray}\label{exp-Ln}
  P(-2\rho\log L_n\le x) = P_f ( x) +
  \omega_2\left\{ P_{f+4}  (x ) - P_f(x)   \right\} +O(n^{-3})~,
\end{eqnarray}
where  $P_k(x)= P(\chi^2_k \le x)$ and
\[     \rho= 1- \frac{2p^2+p+2}{6pn},\qquad
\omega_2=\frac{ (p+2)(p-1)(p-2)(2p^3+6p^2+3p+2)}{288p^2n^2\rho^2}~.
\]

By  observing that the asymptotic
variance of  $-2\log L_n$ is proportional to  $\tr \{
\Sigma(\tr \Sigma)^{-1}-p^{-1}I_p \}^2$,  \cite{r10} proposed to use the
statistic
\begin{equation*}
  T_2 = \frac{ p^2n}{2} \tr\left\{ S_n(\tr S_n)^{-1} - p^{-1}I_p
    \right\}^2~
    \label{Nagao}
\end{equation*}
for testing sphericity. \reu{When $p$ is fixed and $n\to\infty$, under the
null hypothesis, it also holds that
$T_2  \Longrightarrow \chi^{2}_f $, which we referred to as \em{John's test}.}
It is observed that $T_2$ is proportional to the square of the
coefficient of  variation of  the sample
eigenvalues, namely
\[  T_2 = \frac{np}{2} \cdot
\frac{ p^{-1} \sum (\ell_i -\overline \ell)^2}{{\overline \ell}^2}~,
\qquad \text{with~~} \overline \ell = \frac1n\sum_i \ell_i~.
\]
Following the idea of the Box-Bartlett correction,  \cite{r19} established
an  expansion for the distribution function of the statistics
$T_2$ (referred as {\em{Nagao's test}}),
\begin{eqnarray}
  P(T_2\le x) &=& P_f(x) + \frac1n \left\{a_p P_{f+6} (x) + b_p P_{f+4}(x)+c_p
  P_{f+2}(x) +d_p P_f(x)
  \right\}\nonumber \\
  &&+ O(n^{-2}),
\label{exp-Nagao}
\end{eqnarray}
where
\begin{eqnarray*}
  && a_p =  \frac1{12}(p^3+3p^2-12-200p^{-1}),~
  b_p =
  \frac1{8}(-2p^3-5p^2+7p-12-420p^{-1})~,
  \\
  &&
  c_p =  \frac1{4}(p^3+2p^2-p-2-216p^{-1})~,
  d_p =  \frac1{24}(-2p^3-3p^2+p+436p^{-1})~.
\end{eqnarray*}

It has been well known that classical multivariate procedures are in general challenged
by large-dimensional data. A small simulation experiment is conducted to explore the performance of the BBLRT and Nagao's test (two corrections) with growing dimension $p$.
The sample size is set to $n=64$ while dimension $p$ increases from
4 to 60 (we have also run other experiments with larger sample sizes
$n$ but conclusions are very similar),
and the nominal level is set to be  $\alpha=0.05$. The samples come from normal vectors with mean zero and identity covariance matrix, and  each pair of $(p,n)$ is assessed with 10000 independent replications.

Table \ref{tradsize} gives  the empirical sizes of BBLRT and Nagao's test. It is found here that when the dimension to sample size ratio $p/n$ is below $1/2$, both tests have an empirical size close to the nominal test level $0.05$. Then when  the ratio grows up, the BBLRT becomes quickly biased while Nagao's test still has a correct empirical size.
It is striking that although Nagao's test is derived under classical ``$p$ fixed, $n\rightarrow \infty$'' regime, it is remarkably robust against dimension inflation.

\begin{table}[htbp]
\centering
\caption{\label{tradsize} Empirical sizes of BBLRT and  Nagao's test
  at $5\%$ significance level based on $10000$ independent
  replications using normal vectors
  $\mathcal{N}(0,I_p)$ for $n=64$ and different values of $p$.}
\begin{tabular}{cccccccc}
\hline
$(p,n)$ &
(4,64) &
(8,64) &
(16,64)&
(32,64)&
(48,64)&
(56,64)&
(60,64)
\\  \hline
BBLRT  &
0.0483 &
0.0523 &
0.0491 &
0.0554 &
0.1262 &
0.3989 &
0.7605
\\
Nagao's test &
 0.0485 &
 0.0495 &
 0.0478 &
 0.0518 &
 0.0518 &
 0.0513 &
 0.0495
\\ \hline
\end{tabular}
\end{table}

Therefore, the goal of this paper is  to propose  novel corrections to both LRT and John's
test to cope with  the large-dimensional context.
Similar works have already been
done in \cite{r15},  which confirms the robustness of John's test in large-dimensions; however, these results assume a Gaussian population. In this paper,  we remove such a  Gaussian restriction, and prove that the robustness of John's test is in fact general. Following the idea of \cite{r15}, \cite{r9} proposed to use a family of well selected U-statistics to test the sphericity; however, as showed in our simulation study in Section \ref{sec:simul}, the powers of our corrected John's test are slightly higher than this test in most cases. More recently, \cite{r26} examined the performance of $T_1$ (a statistic first put forward in \cite{r24}) under non-normality, but with the moment condition $\gamma=3+O(p^{-\epsilon})$, which essentially matches the Gaussian case ($\gamma=3$) asymptotically. We have also removed this moment restriction in our setting. In short, we have unveiled two corrections that have a better performance and removed the  Gaussian or nearly Gaussian restriction found in the existing literature.

From the technical point of view, our approach differs
from \cite{r15} and
follows the one devised in \cite{r4} and \cite{r6}.
The central tool  is a CLT for linear spectral statistics
of sample covariance matrices
established in \cite{r2} and later refined in
\cite{r21}. The paper also contains an original
contribution on this CLT reported in the Appendix:
new formulas for the limiting
parameters in  the CLT.
Since such CLT's are increasingly important in large-dimensional
statistics, we believe that these new formulas  will be of
independent interest for applications other than those considered
in this paper.

The remaining of the  paper is organized as follows.
Large-dimensional corrections to  LRT and John's test are
introduced in   Section~\ref{sec:corrected}.
Section~\ref{sec:simul} reports a detailed Monte-Carlo
study to analyze finite-sample sizes and powers of these two
corrections under both normal  and
non-normal distributed data. Next, Section \ref{powerspike} gives the theoretical analysis of their asymptotic power  under the alternative of a spiked population model. Section \ref{ge} generalizes our test procedures to  populations with an unknown mean.  Technical proofs and
calculations are relegated to
Section~\ref{sec:proofs}. The last Section contains some concluding remarks.

\section{Large-dimensional corrections}
\label{sec:corrected}

From now on, we assume that the observations $Y_1,\ldots,Y_n$ have
the representation $Y_j=\Sigma_p^{1/2} X_j$ where the $p\times n $
table $\{X_1,\ldots,X_n\}=\{x_{ij}\}_{1 \le i\le p,1\le j\le n}$ are
made with an array of i.i.d. standardized random variables (mean 0 and
variance 1).  This setting is motivated by the random matrix theory
and it is generic enough for a precise analysis of the
sphericity test.
Furthermore, under the null hypothesis $H_0: \Sigma_p=\sigma^2I_p$
($\sigma^2$ is unspecified), we notice that both LRT and John's
test are
independent from the scale parameter $\sigma^2$ under the null.
Therefore,  we can assume w.l.o.g. $\sigma^2=1$ when dealing with the
null distributions of these test statistics.  This will be assumed in
all the sections.

Throughout the paper  we will use an indicator $\kappa$ set  to 2
when $\{x_{ij}\}$ are real and to 1 when they are complex as defined in \cite{r3}.
Also, we define  the kurtosis coefficient $\beta=E|x_{ij}|^{4}-1-\kappa$ for both cases and
note that for normal variables,
$\beta=0$ (recall that for a standard complex-valued normal random variable, its real and imaginary parts are two iid. $N(0, \frac12)$ real random variables).

\subsection{The corrected likelihood ratio test (CLRT)}
\label{ssec: CLRT}
For the correction of LRT, let $\mathcal L_n=-2n^{-1}\log L_n~$ be the test statistic for $n\ge 1$.
Our first main result is the following.

\begin{theorem}\label{clt}
  Assume $\{x_{ij}\}$ are iid, satisfying $Ex_{ij}=0, E|x_{ij}|^{2}=1,
  E|x_{ij}|^{4}< \infty$.
  Then under
  $H_{0}$ and  when
  $\frac{p}{n}=y_{n}\rightarrow y\in (0,1)$,
  \begin{eqnarray}\label{a1}
  &&\mathcal L_n + (p-n)\cdot\log(1-\frac{p}{n})-p\nonumber\\
  &&\Longrightarrow
  N\{-\frac{\kappa-1}{2}\log(1-y) + \frac12\beta y,
  -\kappa\log(1-y)-\kappa y \}~.
 \end{eqnarray}
\end{theorem}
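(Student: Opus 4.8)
The plan is to recognise $\mathcal L_n$ as, up to an $o_P(1)$ error, a single linear spectral statistic of the sample covariance matrix $S_n$, and then invoke the CLT for linear spectral statistics, reading off the limiting mean and variance from the (general fourth moment) contour‑integral formulas recalled in the Appendix. From the definition of $L_n$,
\begin{equation*}
\mathcal L_n=-2n^{-1}\log L_n=-\sum_{i=1}^p\log\ell_i+p\log\bar\ell,\qquad \bar\ell=\frac1p\sum_{i=1}^p\ell_i=\frac1p\tr S_n .
\end{equation*}
Under $H_0$ with $\sigma^2=1$ one has $\E\tr S_n=p$ and $\tr S_n=n^{-1}\sum_{i,j}|x_{ij}|^2$, so $\mathrm{Var}(\tr S_n)=n^{-1}p\,\mathrm{Var}(|x_{11}|^2)=O(1)$ and $B_n:=\tr S_n-p=p(\bar\ell-1)$ is $O_P(1)$. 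A first‑order Taylor expansion gives $p\log\bar\ell=p\log(1+B_n/p)=B_n+O_P(B_n^2/p)=B_n+o_P(1)$, whence
\begin{equation*}
\mathcal L_n=\sum_{i=1}^p\bigl(\ell_i-1-\log\ell_i\bigr)+o_P(1)=\sum_{i=1}^p g(\ell_i)+o_P(1),\qquad g(x):=x-1-\log x .
\end{equation*}
Since $y<1$, the Mar\v cenko--Pastur support $[(1-\sqrt y)^2,(1+\sqrt y)^2]$ stays bounded away from $0$, and the finite fourth moment forces the extreme eigenvalues to converge to its edges; hence $g$ is eventually evaluated only near this set, where it is analytic, so $\sum_i g(\ell_i)$ is a genuine linear spectral statistic to which the CLT applies.

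Next I would match the deterministic shift appearing in \eqref{a1} with the CLT centring. Write $\sum_i g(\ell_i)=G_n(g)+p\int g\,dF_{y_n}$, where $F_{y_n}$ is the Mar\v cenko--Pastur law of index $y_n=p/n$ and $G_n(g)=\sum_i g(\ell_i)-p\int g\,dF_{y_n}$ is the centred statistic. Since $\int x\,dF_{y_n}=1$, we have $p\int g\,dF_{y_n}=-p\int\log x\,dF_{y_n}$, and the classical value $\int\log x\,dF_y(x)=\frac{y-1}{y}\log(1-y)-1$ (for $y\in(0,1)$) together with $p\cdot\frac{1-y_n}{y_n}=n-p$ yields $p\int g\,dF_{y_n}=p-(p-n)\log(1-p/n)$. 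Consequently the left‑hand side of \eqref{a1} is exactly $G_n(g)+o_P(1)$, and the theorem is reduced to identifying the Gaussian limit of $G_n(g)$.

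It then remains to apply the CLT for linear spectral statistics of sample covariance matrices (\cite{r2}, \cite{r21}, in the general fourth moment form of the Appendix): $G_n(g)\Longrightarrow N(m(g),v(g))$, where $m(g)$ is a single and $v(g)$ a double contour integral of $g$ around the support of $F_y$, each splitting into a ``$\kappa$‑part'' and a ``$\beta$‑part''. Writing $g=x-1-\log x$ and using the parametrisation $z=1+y+\sqrt y(\zeta+\zeta^{-1})$, $|\zeta|=1$ (equivalently $z=|1+\sqrt y\,e^{i\theta}|^2$), the linear part of $g$ produces the terms in $y$ and the $\log$ part the terms in $\log(1-y)$; carrying out the residue calculus should give $m(g)=-\frac{\kappa-1}{2}\log(1-y)+\frac12\beta y$ and $v(g)=-\kappa\log(1-y)-\kappa y$, which is precisely \eqref{a1}.

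The hard part will be this last step: evaluating the one‑ and two‑fold contour integrals for a test function containing $\log x$, so that the contour must enclose $[(1-\sqrt y)^2,(1+\sqrt y)^2]$ while staying clear of the branch cut $(-\infty,0]$, and keeping careful track of the fourth‑moment ($\beta$) contributions. In particular one has to check that these cancel in the variance $v(g)$ — even though $\mathrm{Var}(\tr S_n)$ alone already contributes a $\beta y$ term, the $\log$ and cross terms cancel it because the $\beta$‑part of the limiting covariance factorises and annihilates $g$ — while they survive in the mean $m(g)$. By comparison, the linearisation of $p\log\bar\ell$ only needs tightness of $B_n$, and the cancellation of the deterministic shift in \eqref{a1} is a direct computation, so both of those are routine.
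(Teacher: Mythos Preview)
Your proposal is correct and follows essentially the same route as the paper: linearise $p\log\bar\ell$ via a Taylor expansion using $B_n=\sum_i\ell_i-p=O_P(1)$, identify the deterministic shift $p-(p-n)\log(1-p/n)$ through the Mar\v cenko--Pastur integrals $F^y(x)=1$ and $F^y(\log x)=\frac{y-1}{y}\log(1-y)-1$, and then invoke the general fourth-moment CLT for linear spectral statistics to obtain the limiting mean and variance. The only cosmetic difference is packaging: the paper applies the CLT jointly to the pair $(\log x,\,x)$ via Lemma~\ref{lem:joint} and then forms the linear combination $-A_n+B_n$, whereas you apply it directly to the single function $g(x)=x-1-\log x$; since the CLT is linear these are equivalent, and in particular your anticipated cancellation of the $\beta$-terms in the variance is exactly what the paper's computation $V_1(1,1)+V_1(2,2)-2V_1(1,2)=-\kappa\log(1-y)-\kappa y$ records.
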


The test based on this asymptotic
normal distribution
will be hereafter referred as the
{\em  corrected likelihood-ratio test} ({\em{CLRT}}).
One may observe that the limiting distribution of the test
crucially depends on the limiting dimension-to-sample ratio $y$
through the factor $-\log(1-y)$.
In particular, the asymptotic variance will  blow up quickly
when $y$ approaches  1, so
it is expected that the power
will seriously break down.
Monte-Carlo  experiments in Section~\ref{sec:simul} will provide
more details on this behavior.

The proof of  Theorem~\ref{clt} is based on  the following lemma.
In all the following, $F^y$ denotes the Mar\v{c}enko-Pastur
distribution of index $y$ ($>0$)  which is introduced and discussed
in the Appendix. \reu{And $F^{y}(f(x))=\int f(x) F^y(dx)$ denotes the integral of function $f(x)$ with respect to $F^y$.}

\begin{lemma}\label{lem:joint}
Let $\{\ell_i\}_{1\le i\le p} $ be
the eigenvalues of the sample covariance matrix $S_{n}=n^{-1}\sum_i
Y_iY_i^*$.
Then under $H_{0}$ and the conditions of Theorem~\ref{clt},
we have
\[
\left(
\begin{array}{cc}
  \sum_{i=1}^{p}\log \ell_i -pF^{y_{n}}(\log x)
  \\
  \sum_{i=1}^{p}\ell_i -pF^{y_{n}}(x)
  \\
\end{array}
\right) \Longrightarrow
N(\mu_1,V_1),
\]
with
\[
\mu_1=
\begin{pmatrix}
  \frac{\kappa-1}{2}\log (1-y) - \frac12 \beta y
  \\
  0
\end{pmatrix} ~,
\]
and
\[
V_1 =
\begin{pmatrix}
  -\kappa\log(1-y)+ \beta y
  &
  (\beta+\kappa)y
  \\
  (\beta+\kappa)y
  &   (\beta+\kappa)y
\end{pmatrix}  ~.
\]
\end{lemma}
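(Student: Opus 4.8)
The plan is to derive Lemma~\ref{lem:joint} from the general CLT for linear spectral statistics (LSS) of large sample covariance matrices --- the theorem of \cite{r2}, in the refined form of \cite{r21} --- combined with the new expressions for the limiting mean and covariance recorded in the Appendix. Under $H_0$ we may take $\Sigma_p=I_p$, so that $S_n=n^{-1}\sum_iX_iX_i^*$ and its empirical spectral distribution tends to the standard Mar\v{c}enko-Pastur law $F^y$, whose support is the compact interval $[(1-\sqrt y)^2,(1+\sqrt y)^2]\subset(0,\infty)$ precisely because $y\in(0,1)$; moreover the smallest sample eigenvalue converges a.s.\ to $(1-\sqrt y)^2>0$. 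Hence $f_1(x)=\log x$ and $f_2(x)=x$ are both analytic on an open neighbourhood of the (eventual) spectrum, so they are admissible test functions. Applying the LSS--CLT jointly to $f_1$ and $f_2$ shows that the bivariate vector with components
\[
G_n(f_j):=\sum_{i=1}^p f_j(\ell_i)-pF^{y_n}(f_j),\qquad j=1,2,
\]
converges to a Gaussian vector; it then remains to identify the limiting mean $\mu_1$ and covariance $V_1$ by evaluating the mean and covariance functionals of the CLT at $f_1$ and $f_2$.

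For the mean vector, the component attached to $f_2(x)=x$ is $0$: indeed $\tr S_n=n^{-1}\sum_{i,j}|x_{ij}|^2$ has expectation exactly $p=pF^{y_n}(x)$, and accordingly the mean functional vanishes on a linear function. For $f_1(x)=\log x$ I would substitute $\log$ into the mean formula of the Appendix: its ``Gaussian'' part yields $\tfrac{\kappa-1}{2}\log(1-y)$ (which recovers the classical value $\tfrac12\log(1-y)$ of \cite{r2} when $\kappa=2$), while the fourth-cumulant part contributes $-\tfrac12\beta y$. These are residue / contour-integral evaluations against the Stieltjes transform of $F^y$; the change of variables $x=|1+\sqrt y\,\zeta|^2$ with $|\zeta|=1$ conveniently reduces them to elementary integrals over the unit circle. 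Collecting the two components gives the stated $\mu_1$.

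For the covariance, the entry $V_1(x,x)$ is elementary: $\mathrm{Var}(\tr S_n)=\tfrac pn\,\mathrm{Var}(|x_{11}|^2)=\tfrac pn\big(E|x_{11}|^4-1\big)=\tfrac pn(\beta+\kappa)\to(\beta+\kappa)y$, in agreement with the covariance functional. The entries $V_1(\log,\log)$ and $V_1(\log,x)$ are obtained by plugging $f_1$ and $f_2$ into that functional: schematically, its Gaussian part contributes $-\kappa\log(1-y)$ to $V_1(\log,\log)$ and $\kappa y$ to each of $V_1(\log,x)$ and $V_1(x,x)$, whereas the fourth-cumulant part adds the common term $\beta y$ to all three entries. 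Assembling these evaluations produces exactly
\[
V_1=\begin{pmatrix}-\kappa\log(1-y)+\beta y & (\beta+\kappa)y\\ (\beta+\kappa)y & (\beta+\kappa)y\end{pmatrix}.
\]

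The routine ingredients are the analyticity check, the appeal to joint asymptotic normality, and the Gaussian-part contour integrals, all of which are standard. The delicate point --- and the step I expect to be the main obstacle --- is the non-Gaussian correction: the CLT of \cite{r2} was originally stated under a matching fourth-moment restriction, and here one needs the general-fourth-moment version, in which both the mean and the covariance acquire terms proportional to $\beta$ whose evaluation must be handled with care. Establishing that the fourth-cumulant contribution to the covariance is exactly $\beta y$ times the $2\times2$ all-ones matrix, and that it shifts the mean of $G_n(\log)$ by $-\tfrac12\beta y$, is precisely where the new formulas of the Appendix are needed, and I would invoke them to carry out these computations.
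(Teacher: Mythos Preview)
Your proposal is correct and follows essentially the same route as the paper: apply the Pan--Zhou CLT (Theorem~\ref{t1}) jointly to $f(x)=\log x$ and $g(x)=x$, then evaluate the mean and covariance functionals via the contour-integral formulas of Proposition~\ref{t2} using the change of variable $x=|1+\sqrt{y}\,\xi|^2$ on $|\xi|=1$. The paper carries out all ten residue computations $I_1(f),I_2(f),J_1(f,f),\dots$ explicitly, whereas you sketch them and supplement with direct moment calculations for the $g(x)=x$ entries (e.g.\ $\mathrm{Var}(\tr S_n)$); these shortcuts are valid but the underlying strategy is identical.
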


\noindent The proof of this lemma is postponed to Section~\ref{sec:proofs}.

\medskip

\begin{proof} (of Theorem~\ref{clt}). \quad
Let $A_{n}\equiv \sum_{i=1}^{p}\log \ell_i -pF^{y_{n}}(\log x)$
and $B_{n}\equiv \sum_{i=1}^{p}\ell_i -pF^{y_{n}}(x)$. By
Lemma~\ref{lem:joint},
\[
\left(
\begin{array}{cc}
  A_{n}
  \\
  B_{n}
  \\
\end{array}
\right) \Longrightarrow
N(\mu_1,V_1).
\]
Consequently,
$-A_n+B_n$ is  asymptotically normal  with mean
$-\frac{\kappa-1}{2}\log(1-y)+\frac{1}{2}\beta y$
and variance
\[
  V_1(1,1)+V_1(2,2)-2V_1(1,2)=-\kappa\log(1-y)-\kappa y.
\]
Besides,
\begin{eqnarray*}
\mathcal L_n&=&-\Sigma_{i=1}^{p}\log \ell_i +p\log(\frac{1}{p}\Sigma_{i=1}^{p}\ell_i )\\
&=&-(A_{n}+pF^{y_{n}}(\log x))+p\log (\frac{1}{p}(B_{n}+p))\\
&=&-A_{n}-pF^{y_{n}}(\log x)+p\log (1+\frac{B_{n}}{p}).\\
\end{eqnarray*}
Since $ B_{n} \Rightarrow N(0,y(\beta+\kappa))$,
$B_{n}=O_{p}(1)$ and
$ \log(1+{B_{n}}/{p})={B_{n}}/{p}+O_{p}({1}/{p^{2}}).$ Therefore,
\[\mathcal L_n  =-A_n-pF^{y_{n}}(\log x)+B_n+O_{p}(\frac{1}{p})~.
\]
The conclusion follows with the following well-known integrals
w.r.t. the Mar\v{c}enko-Pastur distribution $F^y$ ($y<1$)  \reu{in \cite{r2}},
\[
F^{y}(\log x)=\frac{y-1}{y}\log(1-y)-1~,  \quad
F^{y}(x)=1.
\]
The proof of Theorem \ref{clt} is complete.
\end{proof}

\subsection{The corrected John's test (CJ)}
\label{ssec:CN}

Earlier than the asymptotic expansion \eqref{exp-Nagao} given in
\cite{r19},
\cite{r10} proved
that when the observations are normal,
the sphericity test based on
$T_2$  is a locally most powerful invariant test.
It is also established in
\cite{r11} that
under these conditions, the limiting distribution of $T_2$
under $H_0$ is $\chi^2_f$ with degree of freedom
$f=\frac12p(p+1)-1 $, or equivalently,
\begin{eqnarray*}
  n U-p\Longrightarrow \frac{2}{p}\chi^2_f
  -p~,
\end{eqnarray*}
where
for convenience, we have let
$U=2(np)^{-1}T_2$.
Clearly, this limit has been  established for $n\to\infty$ and a fixed dimension
$p$. However, if we now let $p\to\infty$ in the right-hand side
of the  above result,
it is not hard to see that
$\frac{2}{p}\chi^2_f  -p$ will tend to the normal distribution
$N(1,4)$.
It  then seems ``natural'' to conjecture that when both
$p$ and $n$ grow to infinity in some  ``proper'' way, it may happen that
\begin{eqnarray}\label{LW}
  n U-p\Longrightarrow N(1,4)~.
\end{eqnarray}
This is indeed the main result of \cite{r15}
where this asymptotic distribution was established
assuming that data are normal-distributed and
$p$ and $n$ grow to infinity in a proportional way (i.e.
$p/n\to y>0$).

In this section, we provide a more general result using our own
approach. In particular, the distribution of the observation is
arbitrary provided a finite fourth moment exists.

\begin{theorem}\label{cnclt}
Assume $\{x_{ij}\}$ are iid, satisfying $Ex_{ij}=0, E|x_{ij}|^{2}=1,
E|x_{ij}|^{4}< \infty$, and let $U=2(np)^{-1}T_2$ be the test statistic. Then under
$H_{0}$ and when $p\rightarrow \infty, n\rightarrow \infty,
\frac{p}{n}=y_{n}\rightarrow y\in (0,\infty)$,
\begin{eqnarray}\label{a2}
n U-p\Longrightarrow N(\kappa+\beta-1,2\kappa)~.
\end{eqnarray}
\end{theorem}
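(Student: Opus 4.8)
The plan is to mimic the proof of Theorem~\ref{clt}: express the statistic $nU-p$ in terms of linear spectral statistics of the sample covariance matrix $S_n$, reduce it to a smooth functional of two such statistics, and then apply the CLT for linear spectral statistics (the one reported in the Appendix and invoked through Lemma~\ref{lem:joint}-type arguments). First I would rewrite
\[
  U = \frac{2}{np}T_2 = \frac{p\cdot p^{-1}\sum_i(\ell_i-\overline\ell)^2}{\overline\ell^{\,2}}
     = \frac{\frac1p\sum_i\ell_i^2}{\left(\frac1p\sum_i\ell_i\right)^2}-1,
\]
so that with $P_n=\sum_i\ell_i=\tr S_n$ and $Q_n=\sum_i\ell_i^2=\tr S_n^2$ we have $nU-p = n\left(p\,Q_n/P_n^2-1\right)-p = n\,p\,Q_n/P_n^2 - (n+p)$. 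Hence everything is governed by the joint fluctuation of the two linear spectral statistics $\sum_i\ell_i$ and $\sum_i\ell_i^2$, i.e. of $f_1(x)=x$ and $f_2(x)=x^2$.

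Next I would center these two statistics at $pF^{y_n}(f_k)$ and invoke the CLT for linear spectral statistics to obtain joint asymptotic normality of
\[
  \begin{pmatrix}\sum_i\ell_i - pF^{y_n}(x)\\[2pt] \sum_i\ell_i^2 - pF^{y_n}(x^2)\end{pmatrix}\Longrightarrow N(\mu,V),
\]
computing the mean vector $\mu$ and covariance $V$ from the Appendix formulas; the relevant Mar\v{c}enko--Pastur moments are $F^{y_n}(x)=1$ and $F^{y_n}(x^2)=1+y_n$. Write $P_n = p + \xi_n$ and $Q_n = p(1+y_n) + \eta_n$, where $(\xi_n,\eta_n)$ is $O_p(1)$ with the limiting joint normal law just obtained (note $\xi_n$ has the same role as $B_n$ in Theorem~\ref{clt}, with limiting variance $(\beta+\kappa)y$). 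Then expand
\[
  nU-p = \frac{np\,Q_n}{P_n^2}-(n+p)
       = \frac{np\big(p(1+y_n)+\eta_n\big)}{\big(p+\xi_n\big)^2}-(n+p),
\]
using $(p+\xi_n)^{-2}=p^{-2}\big(1-2\xi_n/p+O_p(p^{-2})\big)$. The deterministic parts must cancel the $-(n+p)$ term exactly up to $o(1)$ (using $n/p = 1/y_n$), leaving a linear combination $a\,\xi_n + b\,\eta_n + o_p(1)$ for explicit constants $a,b$ coming from the first-order Taylor coefficients; a delta-method / Slutsky argument then yields the limiting normal law, whose mean and variance should simplify to $\kappa+\beta-1$ and $2\kappa$ after substituting the entries of $\mu$ and $V$.

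The main obstacle is bookkeeping the cancellation of the $O(n)$ and $O(p)$ deterministic terms: one needs the expansion of $np\,F^{y_n}(x^2)/\big(F^{y_n}(x)\big)^2$ (and the associated bias correction from $\mu$) to be accurate to $o(1)$, not merely to leading order, since a $O(1)$ error there would corrupt the limiting mean. Concretely, the finite-$n$ ratio $y_n=p/n$ must be carried through rather than replaced prematurely by $y$, and the $O(p^{-1})$ remainder in $\log$- or reciprocal-expansions must be shown genuinely negligible after multiplication by $n$; this is the same delicate point that appears in Theorem~\ref{clt} (there the integral $F^y(\log x)$ plays the role our $F^{y_n}(x^2)$ plays here). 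A secondary technical point is verifying that the fourth-moment contribution enters only through $\beta$ in the stated way — this is exactly what the new Appendix formulas for the CLT parameters deliver, so once Lemma-style joint CLT for $(x,x^2)$ is in hand the rest is a controlled computation.
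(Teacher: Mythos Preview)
Your proposal is correct and follows essentially the same route as the paper: the paper packages the joint CLT for $\sum_i\ell_i$ and $\sum_i\ell_i^2$ (with the centering $F^{y_n}(x)=1$, $F^{y_n}(x^2)=1+y_n$ you identified) as Lemma~\ref{cnlemma:joint}, and then applies the delta method to $f(x,y)=x/y^2-1$, exactly the functional you wrote for $U$. The only cosmetic difference is that the paper absorbs the bias into the centering point $(1+p/n+(\kappa+\beta-1)y/p,\,1)$ before applying the delta method, whereas you propose a direct Taylor expansion of $(p+\xi_n)^{-2}$; these are the same computation and your concern about the $O(1)$ bookkeeping is precisely what that shifted centering handles.
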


The test based on the asymptotic normal distribution
given in equation~\eqref{a2} will be hereafter referred as
the {\em corrected John's test (CJ)}.

A striking fact in this theorem is that as in the normal case,
the limiting distribution of CJ
is {\em independent} of  the dimension-to-sample ratio $y=\lim p/n$.
In particular, the limiting distribution derived under classical scheme ($p$ fixed, $n\rightarrow\infty$), e.g. the distribution $\frac2p \chi_{f}^2-p$  in the normal case, when used for large $p$, stays very close to this limiting distribution derived for large-dimensional scheme ($p\rightarrow\infty, n\rightarrow\infty, p/n\rightarrow y \in (0,\infty)$).
In this sense, Theorem~\ref{cnclt}
gives a theoretic explanation to the
widely observed robustness of John's
test against the dimension inflation.  Moreover,
CJ is also valid for the $p$ larger (or much larger)
than $n$ case in contrast to the CLRT where this ratio
should be kept smaller than 1 to avoid null eigenvalues.

It is also worth noticing that for real normal data,
we have  $\kappa=2$ and
$\beta=0$ so that the theorem above reduces to $n U-p\Rightarrow
N(1,4)$. This  is exactly the result discussed in
\cite{r15}. Besides, if the data has
a non-normal
distribution but has  the same first four moments as the
normal distribution, we have again
$n U-p\Rightarrow  N(1,4)$, which turns out to have a universality property.

The proof of Theorem~\ref{cnclt}
is based on the  following lemma.

\begin{lemma}\label{cnlemma:joint}
  Let $\{\ell_i\}_{1\le i\le p} $ be
  the eigenvalues of the sample covariance matrix $S_{n}=n^{-1}\sum_i
  Y_iY_i^*$.
  Then under $H_{0}$ and the conditions of Theorem~\ref{cnclt},
  we have
\[
\left(
\begin{array}{cc}
  \sum_{i=1}^{p}\ell_i ^2-p(1+y_n)
  \\
  \sum_{i=1}^{p}\ell_i -p
  \\
\end{array}
\right) \Longrightarrow
N(\mu_2,V_2),
\]
with
\[
\mu_2=
\begin{pmatrix}
  (\kappa-1+\beta)y
  \\
  0
\end{pmatrix} ~,
\]
and
\[
V_2 =
\begin{pmatrix}
  2\kappa y^2+4(\kappa+\beta)(y+2y^2+y^3)
  &
  2(\kappa+\beta)(y+y^2)
  \\
  2(\kappa+\beta)(y+y^2)
  &   (\kappa+\beta)y
\end{pmatrix}  ~.
\]
\end{lemma}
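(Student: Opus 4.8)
The plan is to read off the joint limit from the CLT for linear spectral statistics (LSS) of sample covariance matrices in its general, arbitrary fourth-moment form, for which new expressions of the limiting mean and covariance are recorded in the Appendix (these refine the formulas of \cite{r2} and \cite{r21}). Under $H_0$ we may take $\Sigma_p=I_p$, so that $S_n=n^{-1}\sum_j X_jX_j^*$ is a standard sample covariance matrix whose empirical spectral distribution converges to the Mar\v{c}enko--Pastur law $F^{y}$. The two quantities in the lemma are exactly the linear spectral statistics $\tr f_1(S_n)$ and $\tr f_2(S_n)$ attached to the polynomials $f_1(x)=x^2$ and $f_2(x)=x$, both of which are entire, hence analytic on a neighbourhood of the support of $F^{y}$ and of its companion distribution. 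I would therefore apply the LSS CLT to the pair $(f_1,f_2)$ to obtain
\[
\Bigl(\ \tr f_1(S_n)-pF^{y_n}(f_1),\ \ \tr f_2(S_n)-pF^{y_n}(f_2)\ \Bigr)\Longrightarrow N(\mu_2,V_2),
\]
and then use $F^{y_n}(x)=1$ and $F^{y_n}(x^2)=1+y_n$ to rewrite the two centering terms as $p$ and $p(1+y_n)$, exactly as in the statement; it is important here to center at the finite-$n$ ratio $y_n$ rather than at $y$, since otherwise a term of order $p(y_n-y)$ would survive.

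The second step is to evaluate the mean vector $\mu_2$ and covariance matrix $V_2$ that the CLT produces for this particular pair of functions. Each is given by a contour integral (equivalently, by extracting a few Laurent coefficients) around the relevant supports, and since $f_1,f_2$ have degree $\le 2$ only a handful of low-order coefficients survive. This should give $m(f_2)=0$ and $m(f_1)=(\kappa-1+\beta)y$, the $(\kappa-1)$ part coming from the real/complex correction and the $\beta$ part from the non-Gaussian fourth-cumulant term; these two mean values can also be cross-checked directly, since $\E\,\tr(S_n)=p$ and $\E\,\tr(S_n^2)=p(1+y_n)+y_n(\kappa+\beta-1)$ by an elementary counting of index coincidences. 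For the covariance, the Gaussian ($\kappa$) part of the formula contributes $2\kappa y^2$, $2\kappa(y+y^2)$ and $\kappa y$ to the $(1,1)$, $(1,2)$ and $(2,2)$ entries, while the fourth-cumulant part supplies the remaining $\beta$-proportional contributions; combining them produces the $(\kappa+\beta)$-factors displayed in $V_2$, with the extra $4(\kappa+\beta)(y+2y^2+y^3)$ in the $(1,1)$ slot arising because $f_1$ is quadratic. Collecting the pieces yields exactly $\mu_2$ and $V_2$.

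I expect the computation in the second step to be routine polynomial bookkeeping; the one genuine obstacle is to apply the non-Gaussian ($\beta$) corrections with the correct normalization, which is precisely where the reformulated limiting-parameter formulas of the Appendix are needed. A direct appeal to the original Bai--Silverstein statement would not suffice here, since that version is restricted to populations whose fourth moment matches the Gaussian one; and although the means $\mu_2$ also follow from a short moment computation, obtaining the $(1,1)$ and $(1,2)$ entries of $V_2$ by hand would require controlling eighth-order mixed moments of the entries of $S_n^2$, which the LSS route neatly bypasses. Once the limiting parameters are in hand the lemma follows, and Theorem~\ref{cnclt} is then obtained from it by expanding $U=p\,\tr(S_n^2)/(\tr S_n)^2-1$ around the centering point $(p,\,p(1+y_n))$, in the same way as in the proof of Theorem~\ref{clt}.
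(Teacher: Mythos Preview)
Your proposal is correct and follows essentially the same route as the paper: apply the general-fourth-moment CLT of Theorem~\ref{t1} in the Appendix to the pair $f(x)=x^2$, $g(x)=x$, use $F^{y_n}(x)=1$ and $F^{y_n}(x^2)=1+y_n$ for the centering, and then read off $\mu_2,V_2$ from the contour-integral formulas of Proposition~\ref{t2}. The paper differs only in that it carries out those contour integrals explicitly (computing $I_1,I_2,J_1,J_2$ for $f$ and $g$), whereas you describe the outcome heuristically; your informal split of the $(1,1)$ covariance into ``$2\kappa y^2$ from the Gaussian part plus a $(\kappa+\beta)$ remainder'' is slightly misleading, since in fact $\kappa J_1(f,f)=2\kappa y^2+4\kappa(y+2y^2+y^3)$ and $\beta J_2(f,f)=4\beta(y+2y^2+y^3)$, but the total is of course the same.
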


\noindent The proof of this lemma is postponed to Section~\ref{sec:proofs}.
\begin{proof} (of Theorem~\ref{cnclt}).\quad
The result of Lemma~\ref{cnlemma:joint} can be rewritten as:\\
\[n
\left(
\begin{array}{cc}
  p^{-1}\sum_{i=1}^p \ell_i ^2-1-\frac{p}{n}-\frac{(\kappa+\beta-1)y}{p} \\
  p^{-1}\sum_{i=1}^p \ell_i -1
\end{array}
\right)
\Longrightarrow
N\Bigg(
\left(
\begin{array}{cc}
  0  \\  0
\end{array}
\right), \
\frac{1}{y^2}\cdot V_2
\Bigg).
\]
Define the function $f(x,y)=\frac{x}{y^2}-1$, then $U=f(p^{-1}\Sigma_{i=1}^{p}\ell_i ^2,~p^{-1}\Sigma_{i=1}^{p}\ell_i )$.

\noindent We have
\begin{eqnarray*}
&&\frac{\partial f}{\partial x}(1+\frac{p}{n}+\frac{(\kappa+\beta-1)y}{p}, 1)=1~,\\
&&\frac{\partial f}{\partial y}(1+\frac{p}{n}+\frac{(\kappa+\beta-1)y}{p}, 1)=-2(1+\frac{p}{n}+\frac{(\kappa+\beta-1)y}{p})~,\\
&&f(1+\frac{p}{n}+\frac{(\kappa+\beta-1)y}{p}, 1)=\frac{p}{n}+\frac{(\kappa+\beta-1)y}{p}~.\\
\end{eqnarray*}
By the delta method,
\[n\Big(U-f(1+\frac{p}{n}+\frac{(\kappa+\beta-1)y}{p}, 1)\Big)\Longrightarrow
N(0,\lim C),\]
where\\
$C=
$$
 \left(
\begin{array}{cc}
\frac{\partial f}{\partial x}(1+\frac{p}{n}+\frac{(\kappa+\beta-1)y}{p}, 1)
 \\
\frac{\partial f}{\partial y}(1+\frac{p}{n}+\frac{(\kappa+\beta-1)y}{p}, 1)
\\
 \end{array}
\right)^T \
$$\cdot
\Big(\frac{1}{y^2} V_2 \Big)\cdot
$$
 \left(
\begin{array}{cc}
\frac{\partial f}{\partial x}(1+\frac{p}{n}+\frac{(\kappa+\beta-1)y}{p}, 1)
 \\
\frac{\partial f}{\partial y}(1+\frac{p}{n}+\frac{(\kappa+\beta-1)y}{p}, 1)
\\
 \end{array}
\right) \
$$
$\\

$\longrightarrow 2\kappa~.$\\

\noindent Therefore,
\begin{eqnarray*}
n(U- \frac{p}{n}-\frac{(\kappa+\beta-1)y}{p})\Longrightarrow N(0, 2\kappa)~,\\
\end{eqnarray*}
that is,
\begin{eqnarray*}
 n U-p\Longrightarrow N(\kappa+\beta-1,2\kappa)~.\\
 \end{eqnarray*}
The proof of Theorem \ref{cnclt} is complete.
\end{proof}

\begin{remark}
Note that in \reu{Theorems} \ref{clt} and \ref{cnclt}  appears the parameter $\beta$, which is in practice unknown  with real data. So
we may estimate the parameter $\beta$
using the fourth-order  sample moment:
\[ \widehat\beta= \frac{1}{np} \sum_{i=1}^p\sum_{j=1}^n|Y_j(i)|^4
 - \kappa -1~.
\]
According to the law of large numbers, $\hat{\beta}=\beta+o_p(1)$, so substituting $\hat{\beta}$ for $\beta$ in \reu{Theorems} \ref{clt} and \ref{cnclt} does not modify the limiting distribution.
\end{remark}

\section{Monte Carlo study}\label{sec:simul}
Monte Carlo simulations are conducted to find  empirical sizes and
powers of CLRT and CJ.
In particular, here
we want to examine the following  questions:
how robust are the tests against non-normal distributed data and
what is  the range of the dimension to sample ratio $p/n$
where the tests are applicable.

For comparison, we show both the performance of the LW  test
using the asymptotic $N(1,4)$ distribution in \eqref{LW}
(Notice however this is the CJ test under normal distribution) and the Chen's test (denoted as {\em{C}} for short) using the asymptotic $N(0,4)$ distribution derived in \cite{r9}.
The nominal
test level is set to be $\alpha=0.05$, and for each pair of $(p,n)$, we run 10000 independent replications.

We consider two scenarios with respect to the random vectors $Y_i$~:
\begin{enumerate}
\item[(a)]
  $Y_i$ is $p$-dimensional real random vector from the multivariate normal
  population ${\mathcal{N}}(0, I_p)$. In this case, $\kappa=2$ and $\beta=0$.
\item[(b)]
  $Y_i$ consists of iid real random variables with distribution $Gamma(4,2)-2$ so that $y_{ij}$ satisfies $E y_{ij}=0, E y_{ij}^4=4.5$.  In this case, $\kappa=2$ and $\beta=1.5$.
\end{enumerate}

\medskip

\begin{table}[htbp]
\centering
\caption{\label{SIZE}Empirical sizes of LW, CJ, CLRT and C test
  at $5\%$ significance level based on 10000 independent applications
  with real $N(0,1)$ random variables
  and with real Gamma(4,2)-2 random variables.}
\begin{tabular}{ccccccccccc}
\hline
\multirow{2}{*}{$(p, n)$}  & & \multicolumn{3}{c}{$N(0, 1)$} &  & & \multicolumn{4}{c}{Gamma(4,2)-2} \\
\cline{3-5}\cline{8-11}
& &  LW/CJ & CLRT & C  & & & LW & CLRT & CJ & C\\
\hline
(4,64) & & 0.0498 &	0.0553 & 0.0523 & & & 0.1396 & 0.074 & 0.0698 & 0.0717\\
(8,64) & & 0.0545 & 0.061 & 0.0572 & & & 0.1757 & 0.0721 & 0.0804 & 0.078\\
(16,64) & &	0.0539 & 0.0547 & 0.0577 & & & 0.1854 & 0.0614 & 0.078 & 0.0756\\
(32,64) & &	0.0558 & 0.0531 & 0.0612 & & & 0.1943 & 0.0564 & 0.0703 & 0.0682\\
(48,64) & & 0.0551 & 0.0522 & 0.0602 & & & 0.1956 & 0.0568 & 0.0685 & 0.0652\\
(56,64) & & 0.0547 & 0.0505 & 0.0596 & & & 0.1942 & 0.0549 & 0.0615 & 0.0603\\
(60,64) & & 0.0523 & 0.0587 & 0.0585 & & & 0.194 & 0.0582 & 0.0615 & 0.0603\\
\hline
(8,128) & & 0.0539 & 0.0546 & 0.0569 & & & 0.1732 & 0.0701 & 0.075 & 0.0754\\
(16,128) & & 0.0523	& 0.0534 & 0.0548 & & & 0.1859 & 0.0673 & 0.0724 & 0.0694\\
(32,128) & & 0.051	& 0.0545 & 0.0523 & & & 0.1951 & 0.0615 & 0.0695 & 0.0693\\
(64,128) & & 0.0538	& 0.0528 & 0.0552 & & & 0.1867 & 0.0485 & 0.0603 & 0.0597\\
(96,128) & & 0.055	& 0.0568 & 0.0581 & & & 0.1892 & 0.0539 & 0.0577 & 0.0579\\
(112,128) & & 0.0543 & 0.0522 & 0.0591 & & & 0.1875 & 0.0534 & 0.0591 & 0.0593\\
(120,128) & & 0.0545 & 0.0541 & 0.0561 & & & 0.1849 & 0.051 & 0.0598 & 0.0596\\
\hline
(16,256) & & 0.0544	& 0.055 & 0.0574 & & & 0.1898 & 0.0694 & 0.0719 & 0.0716\\
(32,256) & & 0.0534	& 0.0515 & 0.0553 & & & 0.1865 & 0.0574 & 0.0634 & 0.0614\\
(64,256) & & 0.0519	& 0.0537 & 0.0522 & & & 0.1869 & 0.0534 & 0.0598 & 0.0608\\
(128,256) & & 0.0507 & 0.0505 & 0.0498 & & & 0.1858 & 0.051 & 0.0555 & 0.0552\\
(192,256) & & 0.0507 & 0.054 & 0.0518 & & & 0.1862 & 0.0464 & 0.052 & 0.0535\\
(224,256) & & 0.0503 & 0.0541 & 0.0516 & & & 0.1837 & 0.0469 & 0.0541 & 0.0538\\
(240,256) & & 0.0494 & 0.053 & 0.0521 & & & 0.1831 & 0.049 & 0.0533 & 0.0559\\
\hline
(32,512) & & 0.0542	& 0.0543 & 0.0554 & & & 0.1884 & 0.0571 & 0.0606 & 0.059\\
(64,512) & & 0.0512	& 0.0497 & 0.0513 & & & 0.1816 & 0.0567 & 0.0579 & 0.0557\\
(128,512) & & 0.0519& 0.0567 & 0.0533 & & & 0.1832 & 0.0491 & 0.0507 & 0.0504\\
(256,512) & & 0.0491 & 0.0503 & 0.0501 & & & 0.1801 & 0.0504 & 0.0495 & 0.0492\\
(384,512) & & 0.0487 & 0.0505 & 0.0499 & & & 0.1826 & 0.051 & 0.0502 & 0.0507\\
(448,512) & & 0.0496 & 0.0495 & 0.0503 & & & 0.1881 & 0.0526 & 0.0482 & 0.0485\\
(480,512) & & 0.0488 & 0.0511 & 0.0505 & & & 0.1801 & 0.0523 & 0.053 & 0.0516\\
\hline
\end{tabular}
\end{table}

Table \ref{SIZE} reports the sizes of the four tests in these two
scenarios for different values of $(p, n)$. We see that when $\{y_{ij}\}$
are normal, LW (=CJ), CLRT and C all
have similar empirical sizes tending to the nominal level 0.05
as either  $p$ or $n$ increases. But when $\{y_{ij}\}$ are Gamma-distributed,
the sizes of LW are higher than 0.1 no matter how large
the values of $p$ and $n$ are while the sizes of
 CLRT and CJ all converge to the nominal level 0.05 as either
$p$ or  $n$ gets larger. This empirically confirms that
normal assumptions
are needed for the result of  \cite{r15}
while our corrected statistics CLRT and CJ (also the C test) have no such distributional
restriction.

As for  empirical powers, we consider  two alternatives (here, the limiting spectral distributions of $\Sigma_p$ under these two alternatives differs from that under $H_0$):
\begin{enumerate}
\item[(1)]
  $\Sigma_p$ is diagonal with half of its diagonal elements 0.5 and half 1. We denote its power
    by  Power 1~;
\item[(2)]
  $\Sigma_p$ is diagonal with 1/4 of the elements equal  0.5 and 3/4 equal 1.
   We denote its power by  Power 2~.
\end{enumerate}

Table \ref{POWER} reports the powers of LW(=CJ),
CLRT and C when $\{y_{ij}\}$ are distributed as $N(0,1)$, and of CJ, CLRT and C when
$\{y_{ij}\}$ are  distributed as Gamma(4,2)-2,  for the situation when $n$
equals $64$ or $128$, with varying values of $p$  and  under the above mentioned
two alternatives.
For $n=256$ and $p$ varying
from 16 to 240, all the tests have powers around 1 under both
alternatives so that these values are omitted. And in order to find the trend of these
powers, we also present the results when $n=128$ in Figure \ref{normal128} and Figure \ref{gamma128}.

\begin{table}[htbp]
\centering
\caption{\label{POWER}Empirical powers of LW, CJ, CLRT and C test
  at $5\%$ significance level based on 10000 independent applications
  with real $N(0,1)$ random variables
  and with real Gamma(4,2)-2 random variables under two  alternatives Power 1 and 2 (see the text for details).}
\begin{tabular}{cccccccccc}
\multicolumn{10}{c}{$N(0, 1)$}\\
\hline
\multirow{2}{*}{$(p, n)$} & & \multicolumn{3}{c}{Power 1} & & & \multicolumn{3}{c}{Power 2}\\
\cline{3-5}\cline{8-10}
& & LW/CJ &  CLRT & C & & & LW/CJ & CLRT & C\\
\hline
(4,64) & & 0.7754 & 0.7919 & 0.772 & & & 0.4694 & 0.6052 & 0.4716\\
(8,64) & & 0.8662 & 0.8729 & 0.8582 & & & 0.5313 & 0.6756 & 0.5308\\
(16,64)	& & 0.912 & 0.9075 & 0.9029 & & & 0.5732 & 0.6889 & 0.5671\\
(32,64)	& & 0.9384 & 0.8791 & 0.931 & & & 0.5868 & 0.6238 & 0.5775\\
(48,64)	& & 0.9471 & 0.7767 & 0.9389 & & & 0.6035 & 0.5036 & 0.5982\\
(56,64)	& & 0.949 & 0.6663 & 0.9411 & & & 0.6025 & 0.4055 & 0.5982\\
(60,64)	& & 0.9501 & 0.5575 & 0.941 & & & 0.6048 & 0.3328 & 0.5989\\
\hline
(8,128)	& & 0.9984 & 0.9989 & 0.9986 & & & 0.9424 & 0.9776 & 0.9391\\
(16,128) & & 0.9998 & 1 & 0.9998 & & & 0.9698 & 0.9926 & 0.9676\\
(32,128) & & 1 & 1 & 1 & & & 0.9781 & 0.9956 & 0.9747\\
(64,128) & & 1 & 1 & 1 & & & 0.9823 & 0.9897 & 0.9788\\
(96,128) & & 1 & 0.9996 & 1 & & & 0.9824 & 0.9532 & 0.9804\\
(112,128) & & 1 & 0.9943 & 1 & & & 0.9841 & 0.881 & 0.9808\\
(120,128) & & 1 & 0.9746 & 1 & & & 0.9844 & 0.7953 & 0.9817\\
\hline
\\
\\
\multicolumn{10}{c}{$Gamma(4,2)-2$}\\
\hline
\multirow{2}{*}{$(p, n)$}& & \multicolumn{3}{c}{Power 1} & & & \multicolumn{3}{c}{Power 2}\\
\cline{3-5}\cline{8-10}
& & CJ &  CLRT & C & & & CJ & CLRT & C\\
\hline
(4,64) & & 0.6517 & 0.6826 & 0.6628 & & & 0.3998 & 0.5188 & 0.4204\\
(8,64) & & 0.7693 & 0.7916 & 0.781 & & & 0.4757 & 0.5927 & 0.4889\\
(16,64) & & 0.8464 & 0.8439 & 0.846 & & & 0.5327 & 0.633 & 0.5318\\
(32,64) & & 0.9041 & 0.848 & 0.9032 & & & 0.5805 & 0.5966 & 0.5667\\
(48,64) & & 0.9245 & 0.7606 & 0.924 & & & 0.5817 & 0.4914 & 0.5804\\
(56,64) & & 0.9267 & 0.6516 & 0.9247 & & & 0.5882 & 0.4078 & 0.583\\
(60,64) & & 0.9288 & 0.5547 & 0.9257 & & & 0.5919 & 0.3372 & 0.5848\\
\hline
(8,128) & & 0.9859 & 0.9875	& 0.9873 & & & 0.8704 & 0.9294 & 0.8748\\
(16,128) & & 0.999 & 0.999 & 0.9987 & & & 0.9276 & 0.9699 & 0.9311\\
(32,128) & & 0.9999 & 1 & 0.9999 & & & 0.9582 & 0.9873 & 0.9587\\
(64,128) & & 1 & 0.9998 & 1 & & & 0.9729 & 0.984 & 0.9727\\
(96,128) & & 1 & 0.999 & 1 & & & 0.9771 & 0.9482 & 0.9763\\
(112,128) & & 1 & 0.9924 & 1 & & & 0.9781 & 0.8747 & 0.9763\\
(120,128) & & 1 & 0.9728 & 1 & & & 0.9786 & 0.7864 & 0.977\\
\hline
\end{tabular}
\end{table}

The
behavior of Power 1 and Power 2 in each figure related to the three
statistics are similar, except that Power 1 is much higher compared with
Power 2 for a given dimension design $(p,n)$ and any given test for the reason that the
first alternative differs more from the null than the second one. The powers of LW
(in the normal case), CJ (in the Gamma case) and C are all monotonically
increasing in  $p$ for a  fixed value of $n$. But for CLRT,
when $n$ is fixed, the powers first increase in $p$  and then
become decreasing  when
 $p$ is getting close to $n$. This can be explained  by the fact
that when
$p$ is close to $n$, some of the eigenvalues of $S_n$ are getting
close to zero, causing the CLRT nearly degenerate and losing power.

\begin{figure}[h!]
\centering
\begin{minipage}[c]{0.5\textwidth}
\centering
\includegraphics[width=6.5cm]{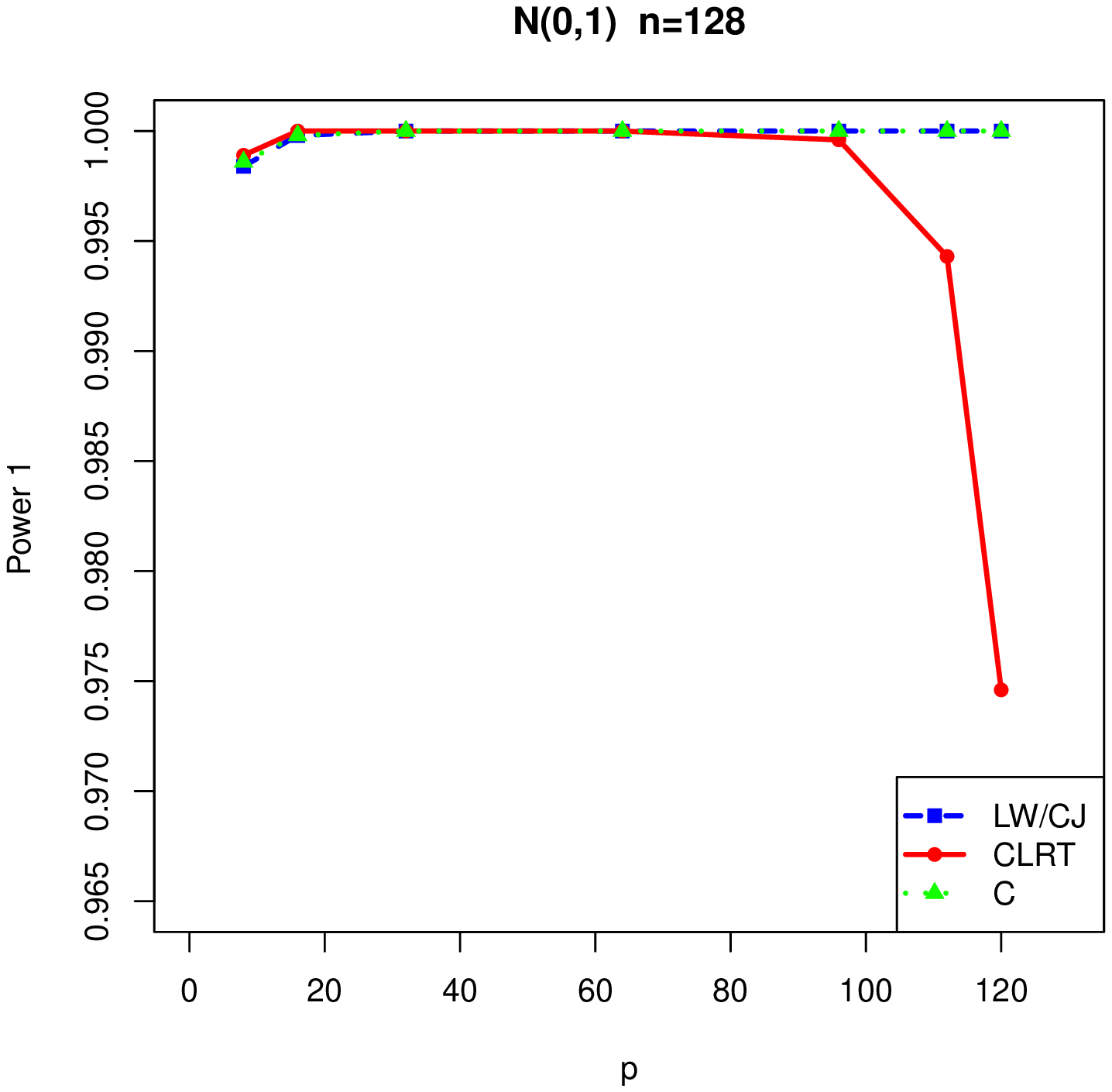}
\end{minipage}%
\begin{minipage}[c]{0.5\textwidth}
\centering
\includegraphics[width=6.5cm]{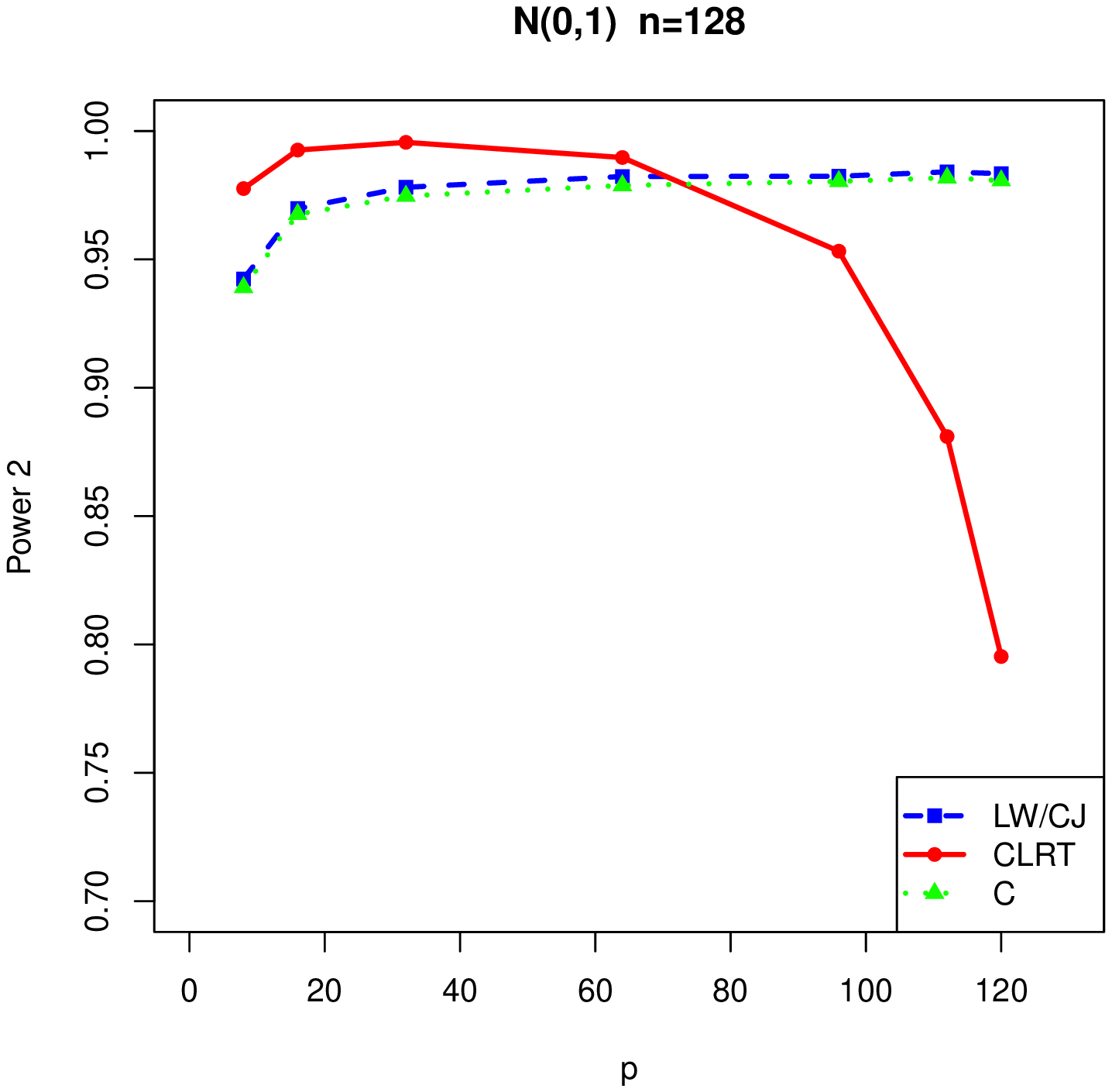}
\end{minipage}
\caption{\small Empirical powers of LW/CJ, CLRT and C test at $5\%$ significance
  level based on 10000 independent applications with real
  N(0,1) random variables for fixed $n=128$  under two  alternatives
  Power 1 and  2 (see the text for details). }\label{normal128}
\end{figure}

\begin{figure}[h!]
\centering
\begin{minipage}[c]{0.5\textwidth}
\centering
\includegraphics[width=6.5cm]{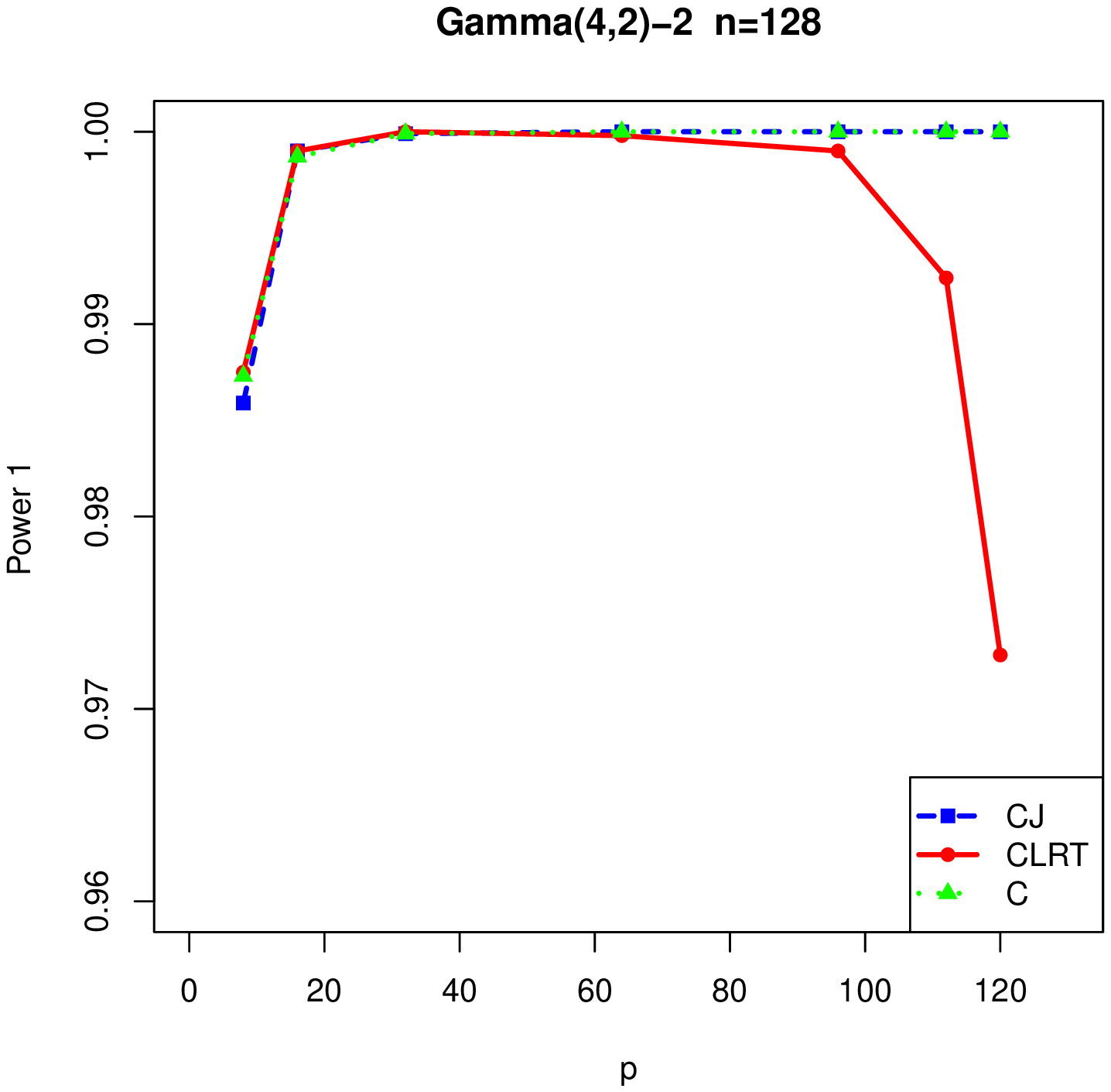}
\end{minipage}%
\begin{minipage}[c]{0.5\textwidth}
\centering
\includegraphics[width=6.5cm]{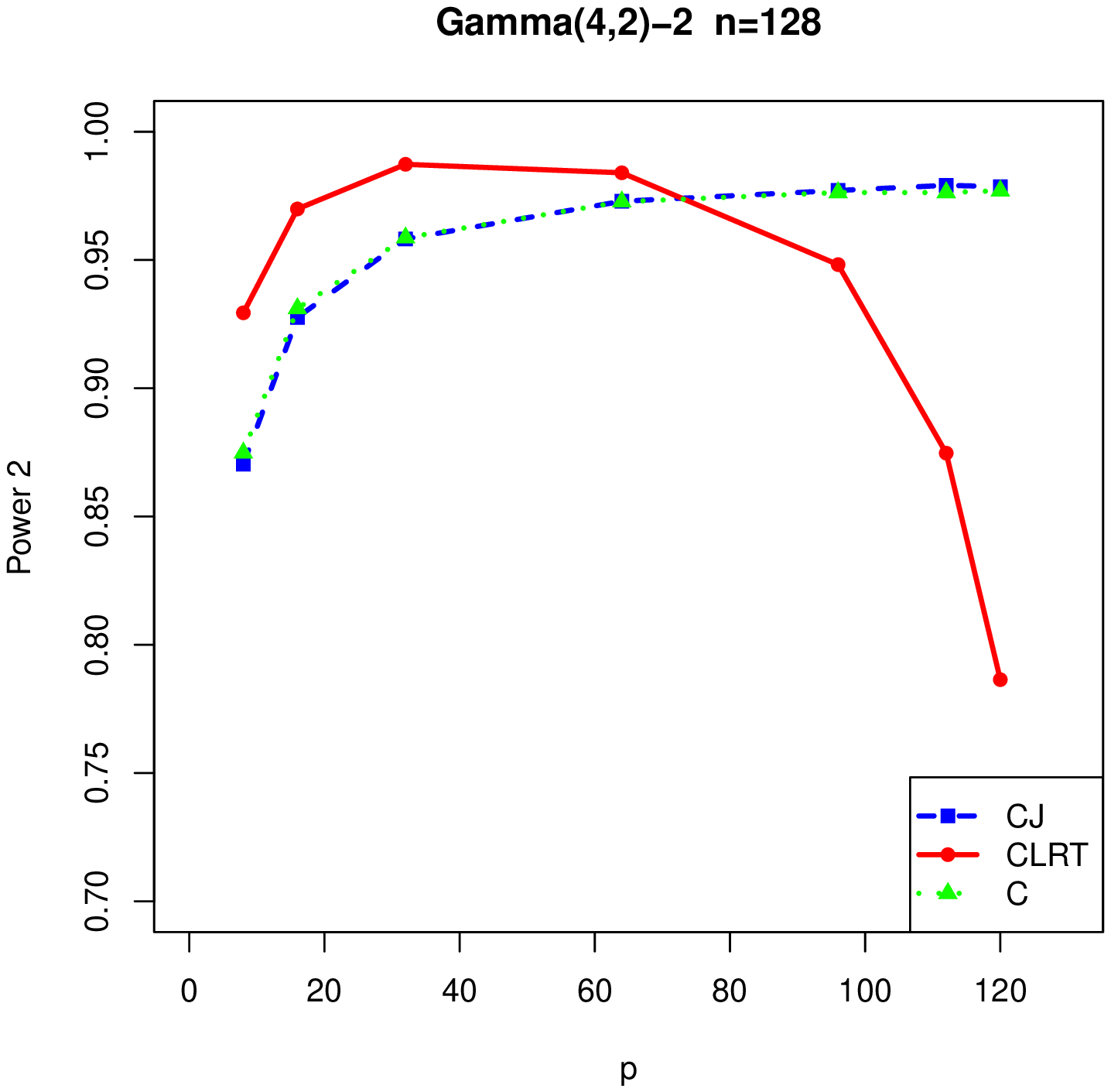}
\end{minipage}
\caption{\small Empirical powers of CJ, CLRT and C test at $5\%$ significance
  level based on 10000 independent applications with real Gamma(4,2)-2
  random variables for fixed $n=128$  under two  alternatives Power 1 and  2 (see the text for details). }\label{gamma128}
\end{figure}

Besides, we find that in the normal case
the trend of C's power is very much alike of those
of LW while in the Gamma case it is similar with those of CJ under both alternatives. And in most of the cases (especially in large $p$ case), the power of C test is slightly lower
 than LW (in the normal case) and CJ (in the Gamma case).

Lastly, we examine  the performance of CJ and C when $p$ is larger than $n$.
Empirical sizes and powers  are presented in Table
\ref{cnsizepower}.
 We choose the variables to be distributed as
Gamma(4,2)-2 since  CJ reduces to  LW in the normal case, and
\cite{r15} has already reported  the performance of LW when
$p$ is larger than $n$. From the table, we see that when $p$ is larger than $n$,
the size of CJ is still correct and it is always around the nominal level 0.05 as the dimension $p$ increases
and the same phenomenon exists for C test.

\begin{table}[h]
\centering
\caption{\label{cnsizepower}Empirical sizes and powers (Power 1 and 2) of CJ test and C test at
  $5\%$ significance level based on 10000 independent applications
  with real Gamma(4,2)-2 random variables when $p\geq n$. }
\begin{tabular}{cccccccccc}
\hline
\multirow{2}{*}{$(p,n)$} & & \multicolumn{3}{c}{CJ} & & & \multicolumn{3}{c}{C}\\
\cline{3-5}\cline{8-10}
 & & Size & Power 1 & Power 2 & & & Size & Power 1 & Power 2\\
\hline
(64,64) & & 0.0624 & 0.9282 & 0.5897 & & & 0.0624 & 0.9257 & 0.5821\\
(320,64) & & 0.0577 & 0.9526 & 0.612 & & & 0.0576 & 0.9472 & 0.6059\\
(640,64) & & 0.0558 & 0.959 & 0.6273 & & & 0.0562 & 0.9541 & 0.6105\\
(960,64) & & 0.0543 & 0.9631 & 0.6259 & & & 0.0551 &  0.955 & 0.6153\\
(1280,64) & & 0.0555 & 0.9607 & 0.6192 & & & 0.0577 & 0.9544 & 0.6067\\
\hline
\end{tabular}
\end{table}

 When we evaluate the power, the same two
alternatives Power 1 and Power 2 as above are considered.
The sample size is fixed to $n=64$ and the ratio $p/n$ varies from
1 to 20.
We see that Power 1 are much  higher than
Power 2
for the
same reason that the first alternative is easier to be distinguished from
$H_0$. Besides, the powers under both alternatives all increase
monotonically for $1\leq \frac{p}{n}\leq 15$. However, when $p/n$ is getting larger, say $p/n=20$, we can observe that its size is a little larger and powers a little drop (compared with $p/n=15$) but overall, it still behaves well, which can be considered as free from the assumption constraint ``$p/n\rightarrow y$''. Besides, the powers of CJ are always slightly higher than those of C in this ``large $p$ small $n$'' setting.

Since the asymptotic distribution for the CLRT
and CJ are both derived under the ``Marcenko-Pasture scheme'' (i.e $p/n\rightarrow y \in (0,\infty)$), if $p/n$ is getting too large ($p \gg n$), it seems that the limiting results provided in this paper will loose accuracy. It is worth noticing that \cite{r7} has extended the LW test to such a scheme ($p \gg n$) for multivariate normal distribution.

Summarizing all these findings from this Monte-Carlo study,
the overall figure is the following: when the ratio
$p/n$ is much lower than 1 (say smaller than 1/2), it is preferable to employ
CLRT (than CJ, LW or C); while this ratio is higher, CJ (or LW for
normal data) becomes more powerful (slightly more powerful than C).

\medskip

\section{Asymptotic powers: under the spiked population alternative}\label{powerspike}
In this section, we give an analysis of the powers of the two corrections: CLRT and CJ. To this end,
 we consider an alternative model
that has attracted lots of attention since its introduction by \cite{r13}, namely, the spiked population model. This model can be described as follows:
the eigenvalues of $\Sigma_p$ are all one except for a few fixed number of them. Thus, we restrict our sphericity testing problem to the following:
\[
H_0:\Sigma_p=I_p~~~vs~~~H_1^{*}: \Sigma_p=diag(\underbrace{a_1,\cdots,a_1}_{n_1},
  \cdots,
  \underbrace{a_k,\cdots,a_k}_{n_k},
  \underbrace{1,\cdots,1}_{p-M}),
\]
where the multiplicity numbers $n_i$'s are fixed and satisfying $n_1+\cdots+n_k=M$. We derive the explicit \reu{expressions} of the power \reu{functions} of CLRT and CJ in this section.

Under $H_1^{*}$, the empirical spectral distribution of $\Sigma_p$ is
\begin{eqnarray}\label{converge}
H_n=\frac{1}{p}\sum_{i=1}^kn_i\delta_{a_i}+\frac{p-M}{p}\delta_1~,
\end{eqnarray}
and it will converge to $\delta_1$, a Dirac mass at 1, which is the same as the limit under the null hypothesis $H_0: \Sigma_p=I_p$.
 From this point of view,
anything related to the limiting spectral distribution remains the same whenever under $H_0$ or $H_1^{*}$. Then recall the CLT for LSS of the sample covariance matrix, \reu{as provided in \cite{r2}}, is of the form:
\[
p\int f(x)d(F^{S_n}(x)-F^{y_n}(x))\Longrightarrow N(\mu, \sigma^2)~,
\]
where the right side of this equation is determined only by the limiting spectral distribution. So we can conclude that the limiting parameters $\mu$ and $\sigma^2$
remain the same under $H_0$ and $H_1^{*}$, only the centering term $p\int f(x)dF^{y_n}(x)$ possibly makes a difference. Since there's a $p$ in front of $\int f(x)dF^{y_n}(x)$, which tends to infinity as assumed, so knowing the convergence $H_n\rightarrow \delta_1$ is not enough and more details about the convergence are needed. In \cite{r28}, we have established an asymptotic expansion for the centering parameter when the population has a spiked structure. We will use these formulas like \reu{equations} \eqref{logx}, \eqref{x} and \eqref{xx} in the following to derive the powers of the CLRT and CJ.

Lemma \ref{lem:joint} remains the same under $H_1^{*}$, except that this time the centering terms become (see formulas (4.11) and (4.12) in \cite{r28}):
\begin{eqnarray}
&&F^{y_n}(\log x)=\frac{1}{p}\sum_{i=1}^k n_i\log a_i-1+(1-\frac{1}{y_n})\log(1-y_n)+O(\frac{1}{p^2})~,\label{logx}\\
&&F^{y_n}(x)=1+\frac{1}{p}\sum_{i=1}^k n_i a_i-\frac{M}{p}+O(\frac{1}{p^2})~.\label{x}
\end{eqnarray}
Repeating the proof of Theorem \ref{clt}, we can get:
\begin{eqnarray}\label{spike1clt}
&&\mathcal{L}_n-p+(p-n)\log(1-\frac{p}{n})+\sum_{i=1}^{k}n_i\big(\log a_i-a_i+1\big)\nonumber\\
&&\Longrightarrow N\Big(-\frac{\kappa-1}{2}\log(1-y)+\frac12\beta y~, ~-\kappa\log(1-y)-\kappa y\Big)
\end{eqnarray}
under $H_1^{*}$. As a result, the power of CLRT for testing $H_0$ against $H_1^{*}$ can be expressed as:
\begin{eqnarray}\label{b1}
\beta_1(\alpha)=1-\Phi\bigg(\Phi^{-1}(1-\alpha)-\frac{\sum_{i=1}^k n_i\big(a_i-\log a_i-1\big)}{\sqrt{-\kappa \log(1-y)-\kappa y}}\bigg)~
\end{eqnarray}
for a pre-given significance level $\alpha$.

\reu{It is worth noticing here that if the alternative has only one simple spike, i.e. $k=1,~n_k=1$, and assuming the real Gaussian variable case, i.e. $\kappa=2$, \eqref{b1} reduces to a result provided in \cite{r20}. However, our formula is valid for a general number of spikes with eventual multiplicities. Besides, these authors use some more sophisticated
tools  of asymptotic contiguity and Le Cam's first and third lemmas, which are totally different from ours.}

In order to calculate the power function of CJ, we restated Lemma \ref{cnlemma:joint} as follows:
\[
\left(
\begin{array}{cc}
  \sum_{i=1}^{p} l_{i}^2-pF^{y_{n}}( x^2)
  \\
  \sum_{i=1}^{p}l_{i}-pF^{y_{n}}(x)
  \\
\end{array}
\right) \Longrightarrow
N(\mu_2,V_2)~,
\]
where
\begin{eqnarray}
&&F^{y_n}(x^2)=\frac{2}{n}\sum_{i=1}^k n_ia_i-\frac{2}{n} M+1+y_n-\frac{M}{p}+\frac1p\sum_{i=1}^k n_i a_i^2+O(\frac{1}{p^2}),\label{xx}\\
&&F^{y_n}(x)=1+\frac{1}{p}\sum_{i=1}^k n_i a_i-\frac{M}{p}+O(\frac{1}{p^2})~.\nonumber
\end{eqnarray}
Using the delta method as in the proof of Theorem \ref{cnclt}, this time, we have
\begin{eqnarray}\label{spike2clt}
nU-p-\frac np \sum_{i=1}^{k}n_i(a_i-1)^2\Longrightarrow N(\kappa+\beta-1, 2\kappa)
\end{eqnarray}
under $H_1^{*}$.

Power function of CJ can be expressed as
\begin{eqnarray}\label{b2}
\beta_2(\alpha)=1-\Phi\bigg(\Phi^{-1}(1-\alpha)-\frac np\cdot\frac{\sum_{i=1}^k n_i(a_i-1)^2}{\sqrt{2\kappa}}\bigg)~
\end{eqnarray}
for a pre-given significance level $\alpha$.

Now consider the \reu{functions} $a_i-\log a_i-1$ and $(a_i-1)^2$ appearing in the \reu{expressions} \eqref{b1} and \eqref{b2}, they will achieve  their minimum value $0$ at $a_i=1$, which is to say, once $a_i$'s going away from 1, the powers $\beta_1(\alpha)$ and $\beta_2(\alpha)$ will both increase. This phenomenon agrees with our intuition, since the more $a_i$'s deviate away from 1, the easier to distinguish $H_0$ from $H_1^{*}$. Therefore, the powers  should naturally grow higher.

\begin{figure}[h]
\centering
\includegraphics[width=7.5cm]{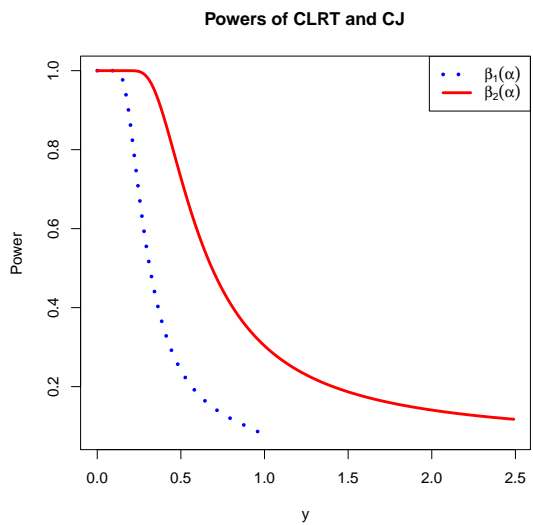}
\caption{\small Theoretical  powers of CLRT ($\beta_1(\alpha)$) and CJ ($\beta_2(\alpha)$) under the spiked population alternative.}\label{beta}
\end{figure}

Then, we consider the power functions $\beta_1(\alpha)$ and $\beta_2(\alpha)$ as  functions of $y$, and see how they are going along with $y's$ changing. we see that in expression \eqref{b1}, $\sqrt{-\log(1-y)-y}$ is increasing when $y \in (0, 1)$, so $\beta_1(\alpha)$ is decreasing as the function of $y$, which attains its maximum value $1$ when $y\rightarrow0_{+}$ and minimum value $\alpha$ when $y\rightarrow1_{-}$. Also, expression \eqref{b2} is obviously a decreasing function of $y$, attaining its maximum value $1$ when $y\rightarrow0_{+}$ and minimum value $\alpha$ when $y\rightarrow +\infty$. We present the trends of $\beta_1(\alpha)$ and $\beta_2(\alpha)$ (corresponding to the power of CLRT and CJ) in Figure \ref{beta} when only one spike $a=2.5$ exists. It is however a little different from the non-spiked case as showed in the simulation in Section \ref{sec:simul} (both Figure \ref{normal128} and Figure \ref{gamma128}), where the power of CLRT first increases then decreases while the power of CJ is always increasing along with the increase of the value of $p$.
These power drops  are due to the fact that when $p$ increases, since only one spike eigenvalue is considered, it becomes harder to distinguish both hypotheses.
Besides, an interesting finding here is that these power functions give a new confirmation of the fact that CLRT behaves quite badly when $y\rightarrow 1_{-}$, while CJ test has a reasonable power for a significant range of $y>1$.

\section{Generalization to the case when the population mean is unknown}\label{ge}
So far, we have assumed the observation $(Y_i)$ are centered. However, this is hardly true in practical situations when $\mu=EY_i$ is usually unknown. Therefore, the sample covariance matrix should be taken as
\[
S_n^{*}=\frac{1}{n-1}\sum_{i=1}^{n}(Y_i-\overline{Y})(Y_i-\overline{Y})^{*}~,
\]
where $\overline{Y}=\frac{1}{n}\sum_{i=1}^{n}Y_i$ is the sample mean.
Because $\overline{Y}\overline{Y}^{*}$ is a rank one matrix, substituting $S_n$ for $S_n^{*}$ when $\mu$ is unknown will not affect the limiting distribution in the CLT for LSS; while it is not the case for the centering parameter, for it has a $p$ in front.

Recently, \cite{r22} shows that if we use $S_n^{*}$ in the CLT for LSS when $\mu$ is unknown, the limiting variance remains the same as we use $S_n$; while the limiting mean has a shift which can be expressed as a complex contour integral. Later, \cite{r30} looks into this shift and finally derives a concise conclusion on the CLT corresponding to $S_n^{*}$:
 the random vector $\big(X_n^{*}(f_1), \cdots, X_n^{*}(f_k)\big)$
converges weakly to a Gaussian vector with the same mean and covariance function as given in  Theorem \ref{t1}, where this time,
$X_n^{*}(f)=p\int f(x)d(F^{S_n^{*}}-F^{y_{n-1}})$. It is here important to pay attention that the only difference is in the centering term, where we use the new ratio $y_{n-1}=\frac{p}{n-1}$ instead of the previous $y_{n}=\frac{p}{n}$, while leaving all the other terms unchanged.

Using this result, we can modify our Theorems \ref{clt} and \ref{cnclt} to get the CLT of CLRT and CJ under $H_0$ when $\mu$ is unknown only by considering the eigenvalues of $S_n^{*}$ and substituting $n-1$ for $n$ in the centering terms. More precisely, now \reu{equations} \eqref{a1} and \eqref{a2} in \reu{Theorems} \ref{clt} and \ref{cnclt} become
\begin{eqnarray*}
  &&\mathcal L_n + (p-n+1)\cdot\log(1-\frac{p}{n-1})-p\nonumber\\
  &&\Longrightarrow
  N\{-\frac{\kappa-1}{2}\log(1-y) + \frac12\beta y,
  -\kappa\log(1-y)-\kappa y \}~
   \end{eqnarray*}
   and
    \begin{eqnarray*}
 n U-\frac{np}{n-1}\Longrightarrow N(\kappa+\beta-1,2\kappa)~.
 \end{eqnarray*}

 The same procedures can be applied to get the CLT of CLRT and CJ under $H_1^{*}$ when $\mu$ is unknown. This time, equations \eqref{spike1clt} and \eqref{spike2clt} become
 \begin{eqnarray*}
&&\mathcal{L}_n-p+(p-n+1)\log(1-\frac{p}{n-1})+\sum_{i=1}^{k}n_i\big(\log a_i-a_i+1\big)\nonumber\\
&&\Longrightarrow N\Big(-\frac{\kappa-1}{2}\log(1-y)+\frac12\beta y~, ~-\kappa\log(1-y)-\kappa y\Big)
\end{eqnarray*}
and
\begin{eqnarray*}
nU-\frac{np}{n-1}-\frac {n}{p} \sum_{i=1}^{k}n_i(a_i-1)^2\Longrightarrow N(\kappa+\beta-1, 2\kappa)~,
\end{eqnarray*}
and therefore the powers of CLRT and CJ under the spiked alternative remain unchanged as expressed in \eqref{b1} and \eqref{b2}.

.

\section{Additional proofs}\label{sec:proofs}
\reu{We recall these two important formulas which appear in the Appendix as \eqref{eq:E} and \eqref{eq:cov} here for the convenience of reading:
  \begin{eqnarray*}
    \E[X_f]& =&   (\kappa-1)  I_1(f) + \beta  I_2(f) ~,\\
    \cov(X_{f},X_{g}) &=& \kappa J_1(f,g) + \beta
    J_2(f,g)~.
  \end{eqnarray*}  }
\subsection{Proof of Lemma~\protect\ref{lem:joint}}
Let for $x>0$, $f(x)=\log x$ and $g(x)=x$.
Define $A_n$ and $B_n$  by the decompositions
\begin{eqnarray*}
  \sum_{i=1}^{p}\log \ell_i & = & p\int f(x)
  d(F_{n}(x)-F^{y_{n}}(x))+pF^{y_{n}}(f) = A_n+pF^{y_{n}}(f)~ , \\
  \sum_{i=1}^{p}\ell_i & = &
  p\int g(x) d(F_{n}(x)-F^{y_{n}}(x))+pF^{y_{n}}(g)= B_n + pF^{y_{n}}(g)  ~.
\end{eqnarray*}
Applying  \reu{Theorem~\ref{t1}}  given in the Appendix to the pair $(f,g)$,
we have
\[
\left(
\begin{array}{cc}
  A_n  \\  B_n
\end{array}
\right)
\Longrightarrow
N\Bigg(
\left(
\begin{array}{cc}
  EX_{f}  \\  EX_{g}
\end{array}
\right), \
\left(
\begin{array}{cc}
  \cov(X_f,X_f)
  &
  \cov(X_f, X_g)
  \\
  \cov(X_g,X_f)
  & \cov(X_g,X_g)
\end{array}
\right) \
\Bigg).
\]
It remains to evaluate the limiting parameters
and this results from the following calculations \reu{where $h$ is denoted as $\sqrt{y}$}:
\begin{eqnarray}
  I_1(f,r)&=& \frac12\log\left( 1-h^2/r^2\right)~,\label{I1f}\\
  I_1(g,r) &=& 0 ~,                               \label{I1g} \\
  I_2(f)&=& -\frac12 h^2~,        \label{I2f} \\
  I_2(g)&=& 0~,                  \label{I2g} \\
  J_1(f,g,r) &=& \frac {h^2}{r^2}~,  \label{J1fg}\\
  J_1(f,f,r) &=& -\frac{1}{r} \log(1-h^2/r) ~,  \label{J1ff}\\
  J_1(g,g,r) &=& \frac {h^2}{r^2}~,  \label{J1gg}\\
  J_2(f,g) &=&  {h^2} ~,  \label{J2fg}\\
  J_2(f,f) &=&  {h^2} ~,  \label{J2ff}\\
  J_2(g,g) &=&  {h^2} ~.  \label{J2gg}
\end{eqnarray}

\noindent We now detail these calculations to complete the proof. They are all
based on the formula given in Proposition~\ref{t2} in the Appendix and
repeated use of the residue theorem.

\medskip\noindent{\em Proof of \eqref{I1f}}:\quad We have
\begin{eqnarray*}
  I_1(f,r)&=& \frac{1}{2\pi i}\oint_{|\xi|=1}f(|1+h\xi|^{2})\Big[\frac{\xi}{\xi^{2}-r^{-2}}-\frac{1}{\xi}\Big]d\xi\\
  &=& \frac{1}{2\pi i}\oint_{|\xi|=1}\log(|1+h\xi|^{2})\Big[\frac{\xi}{\xi^{2}-r^{-2}}-\frac{1}{\xi}\Big]d\xi\\
  &=& \frac{1}{2\pi i}\oint_{|\xi|=1}(\frac{1}{2}\log((1+h\xi)^{2})+\frac{1}{2}\log((1+h\xi^{-1})^{2})\Big[\frac{\xi}{\xi^{2}-r^{-2}}-\frac{1}{\xi}\Big]d\xi\\
  &=& \frac{1}{2\pi i}\Big[\oint_{|\xi|=1}\log(1+h\xi)\frac{\xi}{\xi^{2}-r^{-2}}d\xi-\oint_{|\xi|=1}\log(1+h\xi)\frac{1}{\xi}d\xi\\
    &&+\oint_{|\xi|=1}\log(1+h\xi^{-1})\frac{\xi}{\xi^{2}-r^{-2}}d\xi-\oint_{|\xi|=1}\log(1+h\xi^{-1})\frac{1}{\xi}d\xi\Big]\\
\end{eqnarray*}
For the first integral,
note that as $r>1$, the poles are $\pm \frac{1}{r}$ and we have by
the residue theorem,
\begin{eqnarray*}
  &&\topii \oint_{|\xi|=1}\log(1+h\xi)\frac{\xi}{\xi^{2}-r^{-2}}d\xi\\
  &=&\frac{\log(1+h\xi)\cdot\xi}{\xi-r^{-1}}\Bigg|_{\xi=-r^{-1}}+\frac{\log(1+h\xi)\cdot\xi}{\xi+r^{-1}}\Bigg|_{\xi=r^{-1}}\\
  &=&\frac12 \log(1-\frac{h^{2}}{r^{2}})~.
\end{eqnarray*}
For the second integral,
\begin{eqnarray*}
  \topii \oint_{|\xi|=1}\log(1+h\xi)\frac{1}{\xi}d\xi= \log(1+h\xi)\big|_{\xi=0}=0~.
\end{eqnarray*}
The third one is
\begin{eqnarray*}
&&{\topii\oint_{|\xi|=1}\log(1+h\xi^{-1})\frac{\xi}{\xi^{2}-r^{-2}}d\xi}\\
&&=-\topii\oint_{|z|=1}\log(1+hz)\frac{z^{-1}}{z^{-2}-r^{-2}}\cdot\frac{-1}{z^{2}}dz\\
&&=\topii\oint_{|z|=1}\frac{\log(1+hz)r^{2}}{z(z+r)(z-r)}dz=\frac{\log(1+hz)r^{2}}{(z+r)(z-r)}\Bigg|_{z=0}=0~,
\end{eqnarray*}
where the first equality results from the change of variable
$z=\frac{1}{\xi}$, and the third equality holds because $r >1$,
so the only pole is $z=0$. \\
The fourth one equals
\begin{eqnarray*}
&&\topii \oint_{|\xi|=1}\log(1+h\xi^{-1})\frac{1}{\xi}d\xi=-\topii\oint_{|z|=1}\log(1+hz)\frac{-z}{z^{2}}dz\\
&&=\log(1+hz)\big|_{z=0}=0~.
\end{eqnarray*}
Collecting the four integrals leads to the desired formula for
$I_1(f,r)$.

\medskip\noindent{\em Proof of \eqref{I1g}}:\quad We have

\begin{eqnarray*}
I_1(g,r)&=& \frac{1}{2\pi i}\oint_{|\xi|=1}g(|1+h\xi|^{2})\cdot[\frac{\xi}{\xi^{2}-r^{-2}}-\frac{1}{\xi}]d\xi\\
&=& \frac{1}{2\pi i}\oint_{|\xi|=1}|1+h\xi|^{2}\cdot\Big[\frac{\xi}{\xi^{2}-r^{-2}}-\frac{1}{\xi}\Big]d\xi\\
&=& \frac{1}{2\pi i}\oint_{|\xi|=1}\frac{\xi+h+h\xi^2+h^2\xi}{\xi}\cdot\Big[\frac{\xi}{\xi^{2}-r^{-2}}-\frac{1}{\xi}\Big]d\xi\\
&=& \frac{1}{2\pi i}\oint_{|\xi|=1}\frac{\xi+h+h\xi^2+h^2\xi}{\xi^2-r^{-2}}d\xi\\
&&-\frac{1}{2\pi i}\oint_{|\xi|=1}\frac{\xi+h+h\xi^2+h^2\xi}{\xi^2}d\xi~.\\
\end{eqnarray*}

These two integrals are calculated as follows:
\begin{eqnarray*}
&&\frac{1}{2\pi i}\oint_{|\xi|=1}\frac{\xi+h+h\xi^2+h^2\xi}{\xi^2-r^{-2}}d\xi\\
&=&\frac{\xi+h+h\xi^2+h^2\xi}{\xi-r^{-1}}\Big|_{\xi=-r^{-1}}+\frac{\xi+h+h\xi^2+h^2\xi}{\xi+r^{-1}}\Big|_{\xi=r^{-1}}\\
&=&1+h^2~;
\end{eqnarray*}
\begin{eqnarray*}
&&\frac{1}{2\pi i}\oint_{|\xi|=1}\frac{\xi+h+h\xi^2+h^2\xi}{\xi^2}d\xi
=\frac{\partial}{\partial \xi}(\xi+h+h\xi^2+h^2\xi)\Big|_{\xi=0}\\
&&=1+h^2~.
\end{eqnarray*}

Collecting the two terms leads to $I_1(g,r)=0$.

\medskip\noindent{\em Proof of \eqref{I2f}}:
\begin{eqnarray*}
I_2(f)&=&\frac{1}{2\pi i}\oint_{|\xi|=1}\log(|1+h\xi|^{2})\frac{1}{\xi^{3}}d\xi\\
&=&\frac{1}{2\pi i}\Bigg[\oint_{|\xi|=1}\frac{\log(1+h\xi)}{\xi^{3}}d\xi+\oint_{|\xi|=1}\frac{\log(1+h\xi^{-1})}{\xi^{3}}d\xi\Bigg]~.\\
\end{eqnarray*}
We have
\begin{eqnarray*}
  &&\topii\oint_{|\xi|=1}\frac{\log(1+h\xi)}{\xi^{3}}d\xi=\frac12\frac{\partial^{2}}{\partial
    \xi^{2}}\log(1+h\xi)\Big|_{\xi=0} =-\frac12 h^{2}~;\\
  &&\topii\oint_{|\xi|=1}\frac{\log(1+h\xi^{-1})}{\xi^{3}}d\xi=-\topii\oint_{|z|=1}\frac{\log(1+hz)}{\frac{1}{z^{3}}}\cdot \frac{-1}{z^{2}}dz
  =0~.
\end{eqnarray*}
Combining the two leads to $I_2(f)=-\frac12 h^{2}$.

\medskip\noindent{\em Proof of \eqref{I2g}}:
\begin{eqnarray*}
&& I_2(g)
=\frac{1}{2\pi i}\oint_{|\xi|=1}\frac{(1+h\overline{\xi})(1+h\xi)}{\xi^{3}}d\xi
=\frac{1}{2\pi i}\oint_{|\xi|=1}\frac{\xi+h\xi^{2}+h+h^{2}\xi}{\xi^{4}}d\xi
=0~.
\end{eqnarray*}

\medskip\noindent{\em Proof of \eqref{J1fg}}:
\begin{eqnarray*}
J_{1}(f,g,r)
&=&\frac{1}{2\pi i}\oint_{|\xi_{2}|=1}|1+h\xi_{2}|^{2}\cdot \topii\oint_{|\xi_{1}|=1} \frac{\log(|1+h\xi_{1}|^{2})}{(\xi_{1}-r\xi_{2})^{2}}d\xi_{1}d\xi_{2}~.\\
\end{eqnarray*}
We have,
\begin{eqnarray*}
&&\topii\oint_{|\xi_{1}|=1} \frac{\log(|1+h\xi_{1}|^{2})}{(\xi_{1}-r\xi_{2})^{2}}d\xi_{1}\\
&=&\topii \oint_{|\xi_{1}|=1} \frac{\log(1+h\xi_{1})}{(\xi_{1}-r\xi_{2})^{2}}d\xi_{1}+\topii \oint_{|\xi_{1}|=1} \frac{\log(1+h\xi_{1}^{-1})}{(\xi_{1}-r\xi_{2})^{2}}d\xi_{1}~.
\end{eqnarray*}
The first term
\begin{eqnarray*}
\topii\oint_{|\xi_{1}|=1} \frac{\log(1+h\xi_{1})}{(\xi_{1}-r\xi_{2})^{2}}d\xi_{1}=0,
\end{eqnarray*}
because for fixed $|\xi_2|=1$, $|r\xi_{2}|=|r|> 1$, so $r\xi_{2}$ is
not a pole. \\
The second term is
\begin{eqnarray*}
&&\topii\oint_{|\xi_{1}|=1} \frac{\log(1+h\xi_{1}^{-1})}{(\xi_{1}-r\xi_{2})^{2}}d\xi_{1}=-\topii\oint_{|z|=1} \frac{\log(1+hz)}{(\frac{1}{z}-r\xi_{2})^{2}}\cdot \frac{-1}{z^{2}}dz\\
&&=\topii\cdot\frac{1}{(r\xi_2)^{2}}\oint_{|z|=1} \frac{\log(1+hz)}{(z-\frac{1}{r\xi_{2}})^{2}}dz=\frac{1}{(r\xi_2)^{2}}\cdot \frac{\partial}{\partial z}\log(1+hz)\Big|_{z=\frac{1}{r\xi_2}}\\
&&=\frac{h}{r\xi_2(r\xi_2+h)}~,
\end{eqnarray*}
where the first equality results from the change of variable $z=\frac{1}{\xi_1}$, and the third equality holds because for fixed $|\xi_2|=1$, $|\frac{1}{r\xi_2}|=\frac{1}{|r|}<1$, so $\frac{1}{r\xi_2}$ is a pole of second order.\\
Therefore,
\begin{eqnarray*}
&&J_1(f,g,r)\\
&=&\frac{h}{2\pi ir^{2}}\oint_{|\xi_{2}|=1}\frac{(1+h\xi_2)(1+h\overline{\xi_2})}{\xi_{2}(\xi_{2}+\frac{h}{r})}d\xi_{2}\\
&=&\frac{h}{2\pi ir^{2}}\oint_{|\xi_{2}|=1}\frac{\xi_2+h\xi_2^{2}+h+h^{2}\xi_2}{\xi_{2}^{2}(\xi_{2}+\frac{h}{r})}d\xi_{2}\\
&=&\frac{h}{2\pi ir^{2}}\Bigg[\oint_{|\xi_{2}|=1}\frac{1+h^2}{\xi_{2}(\xi_{2}+\frac{h}{r})}d\xi_{2}+
\oint_{|\xi_{2}|=1}\frac{h}{\xi_{2}+\frac{h}{r}}d\xi_{2}
+\oint_{|\xi_{2}|=1}\frac{h}{\xi_{2}^2(\xi_{2}+\frac{h}{r})}d\xi_{2}\Bigg]~.
\end{eqnarray*}
Finally we find $J_1(f,g,r)=\frac{h^2}{r^2}$ since
\begin{eqnarray*}
&&\frac{h}{2\pi
    ir^{2}}\oint_{|\xi_{2}|=1}\frac{1+h^2}{\xi_{2}(\xi_{2}+\frac{h}{r})}d\xi_{2}=0~,\quad
  \frac{h}{2\pi ir^{2}}\oint_{|\xi_{2}|=1}\frac{h}{\xi_{2}+\frac{h}{r}}d\xi_{2}=\frac{h^2}{r^2}~,\\
&&\frac{h}{2\pi ir^{2}}\oint_{|\xi_{2}|=1}\frac{h}{\xi_{2}^2(\xi_{2}+\frac{h}{r})}d\xi_{2}=0~.
\end{eqnarray*}

\medskip\noindent{\em Proof of \eqref{J1ff}}:
\begin{eqnarray*}
J_1(f,f,r)&=&\topii\oint_{|\xi_{2}|=1}f(|1+h\xi_{2}|^{2})\cdot \topii\oint_{|\xi_{1}|=1} \frac{f(|1+h\xi_{1}|^{2})}{(\xi_{1}-r\xi_{2})^{2}}d\xi_{1}d\xi_{2}\\
&=& \topii\oint_{|\xi_{2}|=1}f(|1+h\xi_{2}|^{2})\frac{h}{r\xi_2(r\xi_2+h)}d\xi_{2}\\
&=&\frac{h}{2\pi ir^{2}}\oint_{|\xi_{2}|=1}\frac{\log(1+h\xi_{2})}{\xi_{2}(\frac{h}{r}+\xi_{2})}d\xi_{2}+\frac{h}{2\pi ir^{2}}\oint_{|\xi_{2}|=1}\frac{\log(1+h\xi_{2}^{-1})}{\xi_{2}(\frac{h}{r}+\xi_{2})}d\xi_{2}~.\\
\end{eqnarray*}
We have
\begin{eqnarray*}
&&\frac{h}{2\pi ir^{2}}\oint_{|\xi_{2}|=1}\frac{\log(1+h\xi_{2})}{\xi_{2}(\frac{h}{r}+\xi_{2})}d\xi_{2}\\
&=&\frac{h}{r^{2}}\Bigg[\frac{\log(1+h\xi_{2})}{\frac{h}{r}+\xi_{2}}\Bigg|_{\xi_{2}=0}+\frac{\log(1+h\xi_{2})}{\xi_{2}}\Bigg|_{\xi_{2}=-\frac{h}{r}}\Bigg]\\
&=&-\frac{1}{r}\log(1-\frac{h^{2}}{r})~,\
\end{eqnarray*}
and
\begin{eqnarray*}
&&\frac{h}{2\pi ir^{2}}\oint_{|\xi_{2}|=1}\frac{\log(1+h\xi_{2}^{-1})}{\xi_{2}(\frac{h}{r}+\xi_{2})}d\xi_{2}=\frac{-h}{2\pi ir^{2}}\oint_{|z|=1}\frac{\log(1+hz)}{\frac{1}{z}(\frac{h}{r}+\frac{1}{z})}\cdot \frac{-1}{z^2}dz\\
&&=\frac{1}{2\pi ir}\oint_{|z|=1}\frac{\log(1+hz)}{z+\frac{r}{h}}dz
=0~,
\end{eqnarray*}
where the first equality results from the change of variable $z=\frac{1}{\xi_2}$, and the third equality holds because $|\frac{r}{h}|> 1$, so $\frac{r}{h}$ is not a pole.

\noindent Finally, we find $J_1(f,f,r)=-\frac{1}{r}\log(1-\frac{h^{2}}{r})$~.

\medskip\noindent{\em Proof of \eqref{J1gg}}:
\begin{eqnarray*}
J_1(g,g,r)&=&\topii\oint_{|\xi_{2}|=1}|1+h\xi_{2}|^{2}\cdot\topii\oint_{|\xi_{1}|=1}\frac{|1+h\xi_{1}|^{2}}{(\xi_{1}-r\xi_{2})^{2}}d\xi_{1}d\xi_{2}~.
\end{eqnarray*}
We have
\begin{eqnarray*}
&&\topii\oint_{|\xi_{1}|=1}\frac{|1+h\xi_{1}|^{2}}{(\xi_{1}-r\xi_{2})^{2}}d\xi_{1}
=\topii\oint_{|\xi_{1}|=1}\frac{\xi_1+h\xi_1^{2}+h+h^{2}\xi_1}{\xi_1(\xi_{1}-r\xi_{2})^{2}}d\xi_{1}\\
&&=\topii\Bigg[\oint_{|\xi_{1}|=1}\frac{1+h^2}{(\xi_{1}-r\xi_{2})^{2}}d\xi_{1}+\oint_{|\xi_{1}|=1}\frac{h\xi_1}{(\xi_{1}-r\xi_{2})^{2}}d\xi_{1}\\
&&+\oint_{|\xi_{1}|=1}\frac{h}{\xi_1(\xi_{1}-r\xi_{2})^{2}}d\xi_{1}\Bigg]\\
&&=\frac{h}{r^2\xi_2^{2}}~,
\end{eqnarray*}
since
\begin{eqnarray*}
&&\topii\oint_{|\xi_{1}|=1}\frac{1+h^2}{(\xi_{1}-r\xi_{2})^{2}}d\xi_{1}=0~,\quad
\topii\oint_{|\xi_{1}|=1}\frac{h\xi_1}{(\xi_{1}-r\xi_{2})^{2}}d\xi_{1}=0~,\\
&&\topii\oint_{|\xi_{1}|=1}\frac{h}{\xi_1(\xi_{1}-r\xi_{2})^{2}}d\xi_{1}=\frac{h}{(\xi_{1}-r\xi_{2})^{2}}\Bigg|_{\xi_1=0}=\frac{h}{r^2\xi_2^{2}}~,\\
\end{eqnarray*}
where the equality above holds because
for fixed $|\xi_2|=1$, $|r\xi_{2}|=|r|> 1$, so $r\xi_{2}$ is not a
pole. Therefore,
\begin{eqnarray*}
  J_1(g,g,r)
&=&\frac{h}{2\pi ir^{2}}\oint_{|\xi_{2}|=1}\frac{\xi_2+h\xi_2^{2}+h+h^{2}\xi_2}{\xi_{2}^{3}}d\xi_{2}\\
&=&\frac{h}{2\pi
    ir^{2}}\Bigg[\oint_{|\xi_{2}|=1}\frac{1+h^2}{\xi_{2}^{2}}d\xi_{2}+\oint_{|\xi_{2}|=1}\frac{h}{\xi_{2}}d\xi_{2}+\oint_{|\xi_{2}|=1}\frac{h}{\xi_{2}^{3}}d\xi_{2}\Bigg]~,\\
  &=&\frac{h^2}{r^2}~.
\end{eqnarray*}

\medskip\noindent{\em Proof of \eqref{J2fg}~\eqref{J2ff}~\eqref{J2gg}}:
We have
\begin{eqnarray*}
&&\topii\oint_{|\xi_{1}|=1}\frac{f(|1+h\xi_{1}|^{2})}{\xi_{1}^{2}}d\xi_{1}
=\topii\oint_{|\xi_{1}|=1}\frac{\log(|1+h\xi_{1}|^{2})}{\xi_{1}^{2}}d\xi_{1}\\
&&=\topii\oint_{|\xi_{1}|=1}\frac{\log(1+h\xi_{1})+\log(1+h\xi_{1}^{-1})}{\xi_{1}^{2}}d\xi_{1}~=h~,
\end{eqnarray*}
since
\begin{eqnarray*}
  &&\topii\oint_{|\xi_{1}|=1}\frac{\log(1+h\xi_{1})}{\xi_{1}^{2}}d\xi_{1}= \frac{\partial}{\partial\xi_{1}}\Big(\log(1+h\xi_{1})\Big)\Bigg|_{\xi_{1}=0}=h~,\\
  &&\topii\oint_{|\xi_{1}|=1}\frac{\log(1+h\xi_{1}^{-1})}{\xi_{1}^{2}}d\xi_{1}=-\topii\oint_{|z|=1}\frac{\log(1+hz)}{\frac{1}{z^{2}}}\cdot (-\frac{1}{z^{2}}dz)\\
&&=\topii\oint_{|z|=1}\log(1+hz)dz
=0~.
\end{eqnarray*}
Similarly,
\begin{eqnarray*}
&&\topii\oint_{|\xi_{2}|=1}\frac{g(|1+h\xi_{2}|^{2})}{\xi_{2}^{2}}d\xi_{2}
=\topii\oint_{|\xi_{2}|=1}\frac{\xi_2+h\xi_2^{2}+h+h^2\xi_2}{\xi_{2}^{3}}d\xi_{2}
=h.
\end{eqnarray*}
Therefore,
\begin{eqnarray*}
&&J_2(f,g)=\topii\oint_{|\xi_{1}|=1}\frac{f(|1+h\xi_{1}|^{2})}{\xi_{1}^{2}}d\xi_{1}\cdot \topii\oint_{|\xi_{2}|=1}\frac{g(|1+h\xi_{2}|^{2})}{\xi_{2}^{2}}d\xi_{2}=h^2~,\\
&&J_2(f,f)=\topii\oint_{|\xi_{1}|=1}\frac{f(|1+h\xi_{1}|^{2})}{\xi_{1}^{2}}d\xi_{1}\cdot \topii\oint_{|\xi_{2}|=1}\frac{f(|1+h\xi_{2}|^{2})}{\xi_{2}^{2}}d\xi_{2}=h^2~,\\
&&J_2(g,g)=\topii\oint_{|\xi_{1}|=1}\frac{g(|1+h\xi_{1}|^{2})}{\xi_{1}^{2}}d\xi_{1}\cdot \topii\oint_{|\xi_{2}|=1}\frac{g(|1+h\xi_{2}|^{2})}{\xi_{2}^{2}}d\xi_{2}=h^2~.\\
\end{eqnarray*}

\noindent The proof of Lemma \ref{lem:joint} is complete.\\

\subsection{Proof of Lemma~\protect\ref{cnlemma:joint}}

Let $f(x)=x^2$ and $g(x)=x$.
Define $C_n$ and $B_n$  by the decompositions
\begin{eqnarray*}
  \sum_{i=1}^{p} \ell_i ^2& = & p\int f(x)
  d(F_{n}(x)-F^{y_{n}}(x))+pF^{y_{n}}(f) = C_n+pF^{y_{n}}(f)~ , \\
  \sum_{i=1}^{p}\ell_i & = &
  p\int g(x) d(F_{n}(x)-F^{y_{n}}(x))+pF^{y_{n}}(g)= B_n + pF^{y_{n}}(g)  ~.
\end{eqnarray*}
Applying
\reu{Theorem~\ref{t1}}  given in the Appendix  to the pair $(f,g)$,
we have
\[
\left(
\begin{array}{cc}
  C_n  \\  B_n
\end{array}
\right)
\Longrightarrow
N\Bigg(
\left(
\begin{array}{cc}
  EX_{f}  \\  EX_{g}
\end{array}
\right), \
\left(
\begin{array}{cc}
  \cov(X_f,X_f)
  &
  \cov(X_f, X_g)
  \\
  \cov(X_g,X_f)
  & \cov(X_g,X_g)
\end{array}
\right) \
\Bigg).
\]
It remains to evaluate the limiting parameters
and this results from the following calculations:
\begin{eqnarray}
  I_1(f,r)&=& \frac{h^2}{r^2}~,\label{2I1f}\\
  I_1(g,r) &=& 0 ~,                               \label{2I1g} \\
  I_2(f)&=&  h^2~,        \label{2I2f} \\
  I_2(g)&=& 0~,                  \label{2I2g} \\
  J_1(f,g,r) &=& \frac {2h^2+2h^4}{r^2}~,  \label{2J1fg}\\
  J_1(f,f,r) &=& \frac{2h^4+(2h+2h^3)^2r}{r^3} ~,  \label{2J1ff}\\
  J_1(g,g,r) &=& \frac {h^2}{r^2}~,  \label{2J1gg}\\
  J_2(f,g) &=&  {2h^2+2h^4} ~,  \label{2J2fg}\\
  J_2(f,f) &=&  {(2h+2h^3)^2} ~,  \label{2J2ff}\\
  J_2(g,g) &=&  {h^2} ~.  \label{2J2gg}
\end{eqnarray}

\noindent The results \eqref{2I1g}, \eqref{2I2g}, \eqref{2J1gg} and
\eqref{2J2gg} are
 exactly the same as  those found
 in \eqref{I1g}, \eqref{I2g}, \eqref{J1gg} and
\eqref{J2gg} in the proof of
Lemma~\ref{lem:joint}. The remaining results are found by similar
calculations using Proposition~\ref{t2} in the Appendix
and their details are omitted.

\section{Concluding remarks}
Using recent central limit theorems for eigenvalues of large sample covariance matrices, we are able to find new asymptotic distributions for two major procedures to test the sphericity of a large-dimensional distribution. Although the theory is developed under the scheme $p\rightarrow \infty$, $n\rightarrow \infty$ and $p/n \rightarrow y >0$, our Monte-Carlo study has proved that: on the one hand, both CLRT and CJ are already very efficient for middle dimension such as $(p,n)=(96,128)$ both in size and power, see Table \ref{SIZE} and Table \ref{POWER}; and on the other hand, CJ also behaves very well in most of ``large $p$, small $n$'' situation, see Table \ref{cnsizepower}.

Three characteristic features emerge from our findings:
\begin{enumerate}
\item[(a)]
  These asymptotic distributions are universal in the sense that they depend on the distribution of the observations only through its first four moments;
\item[(b)]
  The new test procedures improve quickly when either the dimension
  $p$ or the sample size $n$ gets large. In particular, for a given
  sample size $n$, within a wide range of values of $p/n$, higher dimensions $p$ lead to better performance of
  these corrected test statistics.
\item[(c)]
  CJ is particularly robust against the
  dimension inflation. Our Monte-Carlo study shows that for a small
  sample size $n=64$, the test is effective for $0<p/n\le 20$.
\end{enumerate}

In a sense, these new procedures have benefited from the ``blessings of dimensionality''.

\section*{Acknowledgements}
We are grateful to the associate editor and two referees for their numerous comments that have lead to important modifications of the paper.


%

\appendix
\section{Formula for limiting parameters in the CLT
  for eigenvalues of a sample covariance matrix
  with general fourth moments}

Given a sample covariance matrix $S_n$ of dimension $p$
with eigenvalues $\lambda_1,\ldots,\lambda_p$,  linear spectral
statistics
of the form  $F_n(g)=p^{-1} \sum_{i=1}^{p} g(\lambda_j)$ for suitable functions $g$
are of central importance  in multivariate analysis.
Such CLT's have been successively developed  since the pioneering work of
\cite{r12}, see  \cite{r2} and  \cite{r16}
for a recent account on the subject.

The CLT in \cite{r2} (see also an improved  version in
\cite{r5})  has been widely used in applications
as  this CLT also provides, for the first time,
explicit formula for  the mean and
covariance parameters of the normal limiting  distribution.
In the special case with an  array $\{x_{ij}\}$ of independent variables,
this CLT  assumes the following moment conditions:
\begin{enumerate}
\item[(a)]
  For each $n$,
  $x_{ij}=x_{ij}^{n},i\leqslant p,j\leqslant n$ are independent.
\item[(b)]
  $Ex_{ij}=0, E|x_{ij}|^{2}=1, \max_{i,j,n}E|x_{ij}|^{4} <\infty$.
\item[(c)] If $\{x_{ij}\}$'s   are real, then $Ex_{ij}^{4}=3$;
  otherwise (complex variables), $Ex^{2}_{ij}=0$ and
  $E|x_{ij}|^{4}=2$.
\end{enumerate}

In Condition (c), the fourth moments of the
entries are set to the values 3 or 2  matching the normal case.
This is indeed a quite demanding and restrictive condition since in
the real case for example, it is
incredibly hard to find  a non-normal
distribution with mean 0, variance 1 and
fourth moment equaling 3. As a consequence, most of
if not all applications
published in the literature using this CLT assumes a normal
distribution for the  observations.
Recently, effort have been made in
\cite{r21}, \cite{r16}  and \cite{r29}
to overcome these moment restrictions.
We present below such a CLT  with general forth moments that will be
used for the sphericity test.

In all the following, we use an indicator $\kappa$ set  to 2
when $\{x_{ij}\}$ are real and to 1 when they are complex.
Define  $\beta=E|x_{ij}|^{4}-1-\kappa$ for both cases and
$h=\sqrt y$.

For the presentation of the results, let be the sample covariance
matrix $S_n =\frac1n \sum_{i=1}^n X_i X_i^*$  where $X_i=
(x_{ki})_{1\le k\le p}$ is the $i$-th observed  vector.
It is then well-known that when
$p\to\infty$, $n\to\infty$ and  $p/n\to y >0~,$
the distribution of its eigenvalues
converges to a nonrandom  distribution, namely the Mar\v{c}enko-Pastur
distribution $F^y$ with support
$[a,b]=[(1\pm\sqrt y)^2]$ (an additional mass at the origin when
$y>1$).
 Moreover, the Stieltjes transform
$\mbar$ of  a companion distribution defined by
$\underline F^y = (1-y)\delta_0+yF_c$  satisfies
an  inverse equation for $z\in \mathbb{C}^+$,
\begin{equation}\label{eq:z}
  z = -\frac1\mbar + \frac{y}{1+\mbar}.
\end{equation}

The following CLT is a particular instance
of Theorem 1.4  in  \cite{r21}.
\begin{theorem}\label{t1}[\cite{r21}]
  \quad Assume that  for each $n$,
  the variables $x_{ij}=x_{ij}^{n},i\leqslant p,j\leqslant n$
  are independent and identically distributed satisfying
  $Ex_{ij}=0$, $E|x_{ij}|^{2}=1$,
  $E|x_{ij}|^{4}=\beta+1+\kappa <\infty$ and
  in case of they are complex, $Ex^{2}_{ij}=0$.
  Assume moreover,
  \[   p\to\infty, ~~n\to\infty, ~~ p/n\to y >0~. \]
  Let $f_{1},\cdots f_{k}$ be functions analytic on an open region
  containing the support of $F^y$.
  The random vector $\{ X_{n}(f_{1}),\cdots X_{n}(f_{k})\}$ where
  \[  X_n(f) = p\left\{  F_n (f) - F^{y_n}(f) \right\}
  \]
  converges
  weakly to a normal vector $(X_{f_{1}},\cdots X_{f_{k}})$
  with mean function and
  covariance function:
  \begin{eqnarray}
    \E[X_f]& =&   (\kappa-1)  I_1(f) + \beta  I_2(f) ~,
    \label{eq:E}\\
    \cov(X_{f},X_{g}) &=& \kappa J_1(f,g) + \beta
    J_2(f,g)~,
    \label{eq:cov}
  \end{eqnarray}
  where
  \begin{eqnarray*}
  I_1(f) &=& - \topii \oint
  \frac{y\left\{\mbar/(1+\mbar)\right\}^3(z)f(z)}
       { \left[1-y \left\{ \mbar/(1+\mbar)\right\}^2 \right]^2}dz~,
  \\
  I_2(f) &=&
  -\frac{1}{2\pi i}\oint
  \frac{y\left\{\mbar/(1+\mbar)\right\}^3(z)f(z)}
       { 1-y \left\{ \mbar/(1+\mbar)\right\}^2 }dz~,       \\
  J_1(f,g) &=&
  -\frac{1}{4\pi^{2}}
  \oint\oint\frac{f(z_{1})g(z_{2})}{(\underline{m}(z_{1})-\underline{m}(z_{2}))^{2}}\underline{m}'(z_{1})\underline{m}'(z_{2})dz_{1}dz_{2}~,\\
  J_2(f,g) &=& \frac{-y}{4\pi^2}
  \oint f(z_1) \frac{\partial}{\partial z_1}
  \left\{ \frac{\mbar}{1+\mbar}  (z_1)\right\} dz_1
  \cdot
  \oint g(z_2) \frac{\partial}{\partial z_2}
  \left\{ \frac{\mbar}{1+\mbar}  (z_2)\right\} dz_2~,
  \end{eqnarray*}
   where the integrals are along contours (non overlapping in $J_1$) enclosing
the support of $F^y$.
\end{theorem}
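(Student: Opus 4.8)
The plan is to obtain Theorem~\ref{t1} as the $T_n=I_p$ specialization of the general CLT of \cite{r21} (Theorem~1.4 there), which already covers linear spectral statistics of $S_n=\frac1n T_n^{1/2}X_nX_n^*T_n^{1/2}$ under only a finite fourth moment and an arbitrary population covariance $T_n$. First I would check that the present hypotheses---$\{x_{ij}\}$ i.i.d.\ with $Ex_{ij}=0$, $E|x_{ij}|^2=1$, $E|x_{ij}|^4=\beta+1+\kappa<\infty$, $Ex_{ij}^2=0$ in the complex case, and $p/n\to y>0$---are precisely the hypotheses of that theorem once $T_n$ is taken to be $I_p$. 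With $T_n=I_p$ the population spectral distribution is the Dirac mass $\delta_1$, so the limiting spectral distribution is the Mar\v{c}enko--Pastur law $F^y$ and the companion Stieltjes transform $\mbar$ of $\underline F^y$ is the solution of the inverse relation \eqref{eq:z}. The general theorem then applies and gives weak convergence of $(X_n(f_1),\dots,X_n(f_k))$ to a Gaussian vector whose mean and covariance are contour integrals over a contour enclosing the support of $F^y$; what remains is to bring these limiting parameters into the announced closed form.

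For the mean I would split the limit into its ``Gaussian part'' and its fourth-cumulant part. The Gaussian part is exactly the Bai--Silverstein mean of \cite{r2} (see also \cite{r5}) evaluated at $T=I$: it equals $I_1(f)$ in the real case and vanishes in the complex case, so writing the common coefficient as $\kappa-1$---which is $1$ when $\kappa=2$ and $0$ when $\kappa=1$---covers both and yields the term $(\kappa-1)I_1(f)$ of \eqref{eq:E}. The extra contribution produced by a general value of $E|x_{ij}|^4$ is, as in \cite{r21}, proportional to the fourth cumulant and reduces at $T=I$ to $\beta I_2(f)$, the same single contour integral but with only a first power of $1-y\{\mbar/(1+\mbar)\}^2$ in the denominator. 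For the covariance I would argue identically: the Bai--Silverstein covariance at $T=I$ is $\kappa J_1(f,g)$ (the real case carrying the extra factor $2=\kappa$ relative to the complex one), while the fourth-cumulant correction decouples at $T=I$ into a product of two single contour integrals, namely $\beta J_2(f,g)$; together these give \eqref{eq:cov}. Reducing the $T$-dependent integrands of \cite{r21} to these forms is where I would use \eqref{eq:z} to rewrite everything through $\mbar$, $\mbar/(1+\mbar)$ and $\mbar'$.

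I expect the main obstacle to be bookkeeping rather than any delicate estimate: one must track how the normalizations in \cite{r21}'s statement---given for general $T_n$ and with its own convention for the fourth-cumulant term---match the indicator $\kappa$ and the excess $\beta=E|x_{ij}|^4-1-\kappa$, and one must verify that at $T=I$ the double integral in the fourth-cumulant part of the covariance genuinely factorizes as in $J_2$. Some care with contours is also required: $J_1$ involves two non-overlapping contours around the support of $F^y$, and when the same integrals are later re-expressed through the change of variable $z=|1+\sqrt{y}\,\xi|^2$ (as in Proposition~\ref{t2}, the form actually used to prove Lemmas~\ref{lem:joint} and~\ref{cnlemma:joint}) one must confirm that the unit circle $|\xi|=1$ maps to an admissible such contour.
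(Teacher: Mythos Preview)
Your proposal is correct and matches the paper's treatment exactly: the paper does not prove this theorem but simply states it as ``a particular instance of Theorem~1.4 in \cite{r21}'' with $T_n=I_p$, so the argument you outline---specializing the Pan--Zhou CLT to the identity population covariance and tracking how the general mean/covariance formulas reduce to $(\kappa-1)I_1+\beta I_2$ and $\kappa J_1+\beta J_2$---is precisely the intended derivation. If anything, you have supplied more detail than the paper itself, which records only the statement and the citation.
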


However, concrete applications of this CLT are not easy
since  the limiting parameters are given through
those integrals on contours that are only vaguely defined.
The purpose of this appendix is to go a step further
by providing alternative formula for these limiting parameters.
These new  formulas, presented in the following Proposition
convert all these integral along the unit circle; they are
much easier to use  for concrete applications, see for instance
the proofs of  Lemma~\ref{lem:joint}
and \ref{cnlemma:joint} in the paper.
Furthermore, these formulas will be of independent interest
for applications other than those developed in this paper.

\begin{proposition}\label{t2}
  The limiting parameters in Theorem~\ref{t1} can be expressed as
  following:
  \begin{eqnarray}
    I_1(f) &=& \lim_{r\downarrow 1}  I_1(f,r)~,\label{I1}\\
    I_2(f) &=& \frac{1}{2\pi i}\oint_{|\xi|=1}f(|1+h\xi|^{2})\frac{1}{\xi^{3}}d\xi~,\label{I2}\\
    J_1(f,g) &=& \lim_{r\downarrow 1}  J_1(f,g,r)~, \label{J1}\\
    J_2(f,g)
   &=&-\frac{1}{4\pi^{2}}\oint_{|\xi_{1}|=1}\frac{f(|1+h\xi_{1}|^{2})}{\xi_{1}^{2}}d\xi_{1}\oint_{|\xi_{2}|=1}\frac{g(|1+h\xi_{2}|^{2})}{\xi_{2}^{2}}d\xi_{2}~,\label{J2}
  \end{eqnarray}
  with
  \begin{eqnarray*}
    I_1(f,r) &=& \frac1{2\pi i}\oint_{|\xi|=1}f(|1+h\xi|^{2})[\frac{\xi}{\xi^{2}-r^{-2}}-\frac{1}{\xi}]d\xi
    ~,\\
    J_1(f,g,r) &=&
    -\frac{1}{4\pi^{2}}\oint_{|\xi_{1}|=1}\oint_{|\xi_{2}|=1}
    \frac{f(|1+h\xi_{1}|^{2})g(|1+h\xi_{2}|^{2})} {(\xi_{1}-r\xi_{2})^{2}}d\xi_{1}d\xi_{2}~.
  \end{eqnarray*}
\end{proposition}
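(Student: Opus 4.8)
The plan is to reduce each of the four contour integrals that define $I_1,I_2,J_1,J_2$ in Theorem~\ref{t1} to an integral over the unit circle, by means of the classical rational uniformisation of the Mar\v{c}enko--Pastur equation \eqref{eq:z}. Concretely, I would introduce a new variable $\xi$ through $\mbar(z)=-(1+h\xi)^{-1}$, which by \eqref{eq:z} is equivalent to
\[
  z=z(\xi):=(1+h\xi)(1+h\xi^{-1})=1+h^{2}+h\xi+h\xi^{-1}~.
\]
On $|\xi|=1$ one has $\overline{\xi}=\xi^{-1}$, hence $z(\xi)=|1+h\xi|^{2}$, which explains why the test functions evaluated at the eigenvalues turn into $f(|1+h\xi|^{2})$. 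The rational map $\xi\mapsto z(\xi)$ has degree two and its two preimages of any point multiply to $1$; therefore it restricts to a biholomorphism of $\{|\xi|>1\}$ onto $\mathbb{C}\setminus[(1-h)^{2},(1+h)^{2}]$, sending $\{|\xi|=1\}$ two-to-one onto the support. A positively oriented $z$-contour enclosing the support thus corresponds to a positively oriented circle $|\xi|=r$ with $r>1$, and as the $z$-contour collapses onto the support one lets $r\downarrow 1$; this is the origin of the auxiliary parameter $r$ appearing in \eqref{I1} and \eqref{J1}.

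Next I would record the elementary identities implied by the substitution (using $y=h^{2}$):
\[
  \frac{\mbar}{1+\mbar}=-\frac{1}{h\xi}~,\qquad
  1-y\Bigl(\tfrac{\mbar}{1+\mbar}\Bigr)^{2}=\frac{\xi^{2}-1}{\xi^{2}}~,\qquad
  dz=\frac{h(\xi^{2}-1)}{\xi^{2}}\,d\xi~,\qquad
  \mbar'(z)\,dz=d\mbar=\frac{h}{(1+h\xi)^{2}}\,d\xi~,
\]
together with $\mbar(z_{1})-\mbar(z_{2})=h(\xi_{1}-\xi_{2})/[(1+h\xi_{1})(1+h\xi_{2})]$ and $\partial_{z}\{\mbar/(1+\mbar)\}\,dz=d(-(h\xi)^{-1})=(h\xi^{2})^{-1}\,d\xi$. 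Substituting these into the integrands of Theorem~\ref{t1}, the numerous common factors cancel. For $I_{2}(f)$ one is left with $\xi^{-3}f(|1+h\xi|^{2})$, whose only singularity inside the circle is at $\xi=0$, so no limiting parameter is needed and \eqref{I2} follows immediately; similarly the integrand in $J_{2}(f,g)$ factorises and, after using $y/h^{2}=1$, reduces to \eqref{J2}.

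For $I_{1}(f)$ the same cancellation produces the integrand $f(z(\xi))/[\xi(\xi^{2}-1)]$ on a circle $|\xi|=r$ with $r>1$; the substitution $\xi=r\eta$ (which moves the poles $\pm1$ inward to $\pm r^{-1}$, strictly inside the unit circle) rewrites this as an integral over $|\eta|=1$ with kernel $\tfrac{\eta}{\eta^{2}-r^{-2}}-\tfrac1\eta=\tfrac{r^{-2}}{\eta(\eta^{2}-r^{-2})}$, and letting $r\downarrow1$ yields $I_{1}(f)=\lim_{r\downarrow1}I_{1}(f,r)$. For $J_{1}(f,g)$ the integrand becomes $f(z(\xi_{1}))g(z(\xi_{2}))/(\xi_{1}-\xi_{2})^{2}$ over two circles; since Theorem~\ref{t1} requires the two $z$-contours to be non-overlapping, one takes the two $\xi$-circles of slightly different radii and, after rescaling one variable, obtains exactly $J_{1}(f,g,r)$, so that $J_{1}(f,g)=\lim_{r\downarrow1}J_{1}(f,g,r)$.

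The step that needs genuine care is the contour bookkeeping underlying the substitution: one must fix the correct branch of the uniformisation so that the $z$-contour around the support really corresponds to $|\xi|=r$ with $r>1$ (and not $r<1$), check the orientation so that no spurious sign survives, verify that all intermediate deformations stay inside the domain of analyticity of $f(z(\xi))$, and justify the passage $r\downarrow1$ even though the relevant kernels become singular on $|\xi|=1$ (this rests on $f(z(r\eta))\to f(|1+h\eta|^{2})$ at rate $O(r-1)$, which dominates the mild blow-up of the kernel). The point $\xi=0$, where $z(\xi)\to\infty$, is handled through the $\xi\mapsto\xi^{-1}$ symmetry that is already exploited repeatedly in the proofs of Lemmas~\ref{lem:joint} and \ref{cnlemma:joint}; once these points are settled, the remainder is the routine substitution and residue evaluation sketched above.
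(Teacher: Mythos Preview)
Your proposal is correct and follows essentially the same route as the paper: both use the rational uniformisation $z=(1+h\xi)(1+h\xi^{-1})$ of the Mar\v{c}enko--Pastur equation, integrate over a circle of radius $r>1$ in the $\xi$-plane, simplify the resulting integrands via the identities you list, and take $r\downarrow 1$. The only cosmetic difference is that the paper builds $r$ directly into the parametrisation $z=1+hr\xi+hr^{-1}\xi^{-1}+h^{2}$ on $|\xi|=1$, whereas you first integrate over $|\xi|=r$ and then rescale $\xi=r\eta$; these are the same substitution, and your remarks on orientation, branch choice, and the justification of the limit $r\downarrow 1$ are, if anything, slightly more careful than the paper's own treatment.
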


\begin{proof}
  We start with the simplest formula $I_2(f)$ to explain the main
  argument
  and indeed, the other formula are obtained similarly.
  The idea is to introduce the change of the variable
  $z=1+hr\xi+h r^{-1}\overline\xi+h^2$ with $r>1$ but close to 1 and
  $|\xi|=1$
  (recall $h=\sqrt y$). Note that this idea has been employed in
  \cite{r29}.
  It can be readily checked that when $\xi$ runs counterclockwisely
  the unit circle, $z$ will run a contour $\cal C$
  that encloses  closely
  the support interval $[a,b]=[(1\pm h)^2]$ (recall $h=\sqrt y$).
  Moreover, by the Eq.~\eqref{eq:z}, we have on $\cal C$
  \[     \mbar = -\frac{1}{1+hr\xi},
  \quad \text{and~~} dz = h(r-r^{-1}\xi^{-2})d\xi~.
  \]
  Applying this variable change to the formula of  $I_2(f)$ given in
  Theorem~\ref{t1}, we have
  \begin{eqnarray*}
    I_2(f) &=& \lim_{r\downarrow 1}
    \topii \oint_{|\xi|=1} f(z) \frac{1}{\xi^3}\frac{r\xi^2-r^{-1}}{r(r^2\xi^2-1)}d\xi\\
    &=& \topii   \oint_{|\xi|=1} f(|1+h\xi|^2) \frac{1}{\xi^3}d\xi~.
  \end{eqnarray*}
  This proves the formula~\eqref{I2}. For \eqref{I1}, we have similarly
  \begin{eqnarray*}
      I_1(f) &=& \lim_{r\downarrow 1}
    \topii \oint_{|\xi|=1} f(z)
    \frac{1}{\xi^3}\frac{r\xi^2-r^{-1}}{r(r^2\xi^2-1)}  \frac1{1-r^{-2}\xi^{-2}}d\xi\\
    &=& \lim_{r\downarrow 1}
    \topii \oint_{|\xi|=1}  f(|1+h\xi|^2) \frac1{\xi(\xi^2-r^{-2})}\\
    &=& \lim_{r\downarrow 1} I_1(f,r)~.
  \end{eqnarray*}
  Formula~\eqref{J2} for $J_2(f,g)$
  is calculated in a same fashion by observing
  that we have
  \[
  \frac{\partial}{\partial z}
  \left\{ \frac{\mbar}{1+\mbar}  (z)\right\} dz
  =   \frac{\partial}{\partial \xi}
  \left\{ \frac{\mbar}{1+\mbar}  (\xi)\right\} d\xi
  =  \frac{\partial}{\partial \xi}
  \left\{ \frac{1}{-hr\xi} \right\} d\xi
  = \frac{1}{hr\xi^2}d\xi~.
  \]
  Finally for \eqref{J1}, we use two non-overlapping contours
  defined by $z_j= 1+hr_j\xi_j+h r_j^{-1}\overline\xi_j+h^2       $,
  $j=1,2$ where $r_2>r_1>1$.  By observing that
  \[  \mbar'(z_j)dz_j = \left(\frac{\partial}{\partial
    \xi_j}\mbar\right)d\xi_j =
  \frac{hr_j}{(1+hr_j\xi_j)^2} d\xi_j~,
  \]
  we find
  \begin{eqnarray*}
    J_1(f,g) &=&
    \lim_{ {\footnotesize
      \begin{array}{c}r_2>r_1>1 \\[-1.5mm]  r_2\downarrow 1
      \end{array}}
    }
    -\frac1{4\pi^2}
    \oint_{|\xi_1|=1}\!\oint_{|\xi_2|=1}
    \frac{f(z_1)g(z_2)} { \left\{ \mbar(z_1)-\mbar(z_2) \right\}^2 }\\
    &&\cdot \frac{hr_1}{(1+hr_1\xi_1)^2}
    \cdot \frac{hr_2}{(1+hr_2\xi_2)^2}  d\xi_1d\xi_2 \\
    &=&
    \lim_{ {\footnotesize
        \begin{array}{c}r_2>r_1>1, \\[-1.5mm]  r_2\downarrow 1
      \end{array}}
    }
    -\frac1{4\pi^2}
    \oint_{|\xi_1|=1}\!\oint_{|\xi_2|=1}
    \frac{f(z_1)g(z_2)}{\left\{ r_1\xi_1-r_2\xi_2\right\}^2 } d\xi_1d\xi_2
    \\
    &=&   \lim_{r\downarrow 1}
    -\frac1{4\pi^2}
    \oint_{|\xi_1|=1}\!\oint_{|\xi_2|=1}
    \frac{f(|1+h\xi_1|^2)g(|1+h\xi_2|^2)}{\left\{ \xi_1-r\xi_2\right\}^2 } d\xi_1d\xi_2~.
  \end{eqnarray*}
  The proof is complete.
\end{proof}

\end{document}